\documentclass[oneside,12pt]{amsart}
\usepackage[T2A]{fontenc}
\usepackage[cp1251]{inputenc}
\usepackage{amssymb}
\usepackage{amsxtra}
\usepackage{amsmath}
\usepackage{amstext,amsfonts,amscd}

\usepackage{amsthm}
\usepackage[all]{xy}

\usepackage{hyperref}
\usepackage{tikz,graphicx}
\usetikzlibrary{matrix,arrows}

\usepackage[russian,english]{babel}

\oddsidemargin 5mm
\marginparwidth 5mm
\topmargin 5mm
\textheight 220mm
\textwidth 165mm
\headheight 0mm
\headsep 10mm
\footskip 5mm

\DeclareMathOperator{\GL}{GL}

\DeclareMathOperator{\Pic}{Pic}
\DeclareMathOperator{\Div}{Div}

\DeclareMathOperator{\Spin}{Spin}

\DeclareMathOperator{\RR}R

\DeclareMathOperator{\Br}{Br}

\DeclareMathOperator{\Spec}{Spec}

\DeclareMathOperator{\Gm}{{\mathbb G}_m}
\newcommand{\id}{\text{\rm id}}

\DeclareMathOperator{\coker}{coker\,}

\DeclareMathOperator{\divi}{div}

\DeclareMathOperator{\Char}{char}
\DeclareMathOperator{\Supp}{supp}
\newcommand{\Aff}{\mathbb {A}}
\newcommand{\Pro}{\mathbb {P}}

\newcommand{\CF}{F}

\DeclareMathOperator{\A1}{\Aff^1}

\newtheorem{lem}{Lemma}[section]
\newtheorem*{lem*}{Lemma}
\newtheorem*{thm*}{Theorem}
\newtheorem{thm}[lem]{Theorem}
\newtheorem{cor}[lem]{Corollary}
\newtheorem{defn}[lem]{Definition}
\theoremstyle{definition}{
\newtheorem{rem}[lem]{Remark}
}
\theoremstyle{definition}{

}

\let\l\left
\let\r\right

\newcommand{\Sm}{\mathbf{Sm}}
\newcommand{\Ab}{\mathbf{Ab}}
\DeclareMathOperator{\Groups}{\mathbf{Grp}}

\newcommand \xra {\xrightarrow }

\makeatletter




\@addtoreset{equation}{section}


\makeatother

\begin{document}

\selectlanguage{english}

\title{Transfers for non-stable $K_1$-functors of classical type}
\subjclass[2010]{19B28,  14L35, 20G15, 14C35}

\maketitle

\begin{center}
\vspace{20pt}
{\large A. Stavrova\footnote{
The author acknowledges support of the postdoctoral
grant 6.50.22.2014 ``Structure theory, representation theory and geometry of algebraic groups''
of St. Petersburg State University, and the RFBR grants 14-01-31515-mol\_a, 13-01-00429-a.}}\\
\vspace{10pt}
Department of Mathematics and Mechanics\\St. Petersburg State University\\
St. Petersburg, Russia\\
anastasia.stavrova@gmail.com
\end{center}

\begin{abstract}
Let $k$ be a field. Let $G$ be an absolutely almost simple simply connected $k$-group of type $A_l$, $l\ge 2$, or $D_l$, $l\ge 4$,
containing a 2-dimensional split torus. If $G$ is of type $D_l$, assume moreover that $\Char k\neq 2$.
We show that the Nisnevich sheafification of the non-stable $K_1$-functor $K_1^G$, also called the Whitehead
group of $G$, on the category of smooth $k$-schemes is $A^1$-invariant, and
has oriented weak transfers for affine varieties in the sense of Panin-Yagunov-Ross. If
$k$ has characteristic 0, this implies that the Nisnevich sheafification of $K_1^G$ is birationally invariant.

We also prove a rigidity theorem for $\A1$-invariant torsion presheaves with oriented weak transfers over
infinite fields. As a corollary, we conclude that $K_1^G(R)=K_1^G(k)$
whenever $R$ is a Henselian regular local ring with a coefficient field $k$.

\end{abstract}

\section{Introduction}


Let $\Sm_k$ be the category of
Noetherian smooth schemes of finite type over a field $k$. We will consider $\Sm_k$ as a site with Nisnevich topology,
and for any presheaf $F$ on $\Sm_k$, we
denote by $F_{Nis}$ its Nisnevich sheafification. Recall that a presheaf $F$ on $\Sm_k$ is called
\emph{$\Aff^1$-invariant},
if for any object $X\in\Sm_k$ the projection $X\times_k\Aff^1_k\to X$ induces an isomorphism $F(X\times_k\Aff^1_k)\cong F(X)$.

Let $G$ be a reductive group over $k$. We say that $G$
is isotropic if $G$ contains a (proper) parabolic $k$-subgroup $P$, or, equivalently, a non-central subgroup
isomorphic to $\Gm_{,k}$. If this is the case, for any $k$-scheme $X$
we set
$$
E_P(X)=\l<U_P(X),U_{P^-}(X)\r>\subseteq G(X),
$$
where $U_P$ and $U_{P^-}$ are the unipotent radicals of $P$ and any opposite parabolic subgroup $P^-$.
The quotient
$$
G(X)/E_P(X)=K_1^{G,P}(X)=W_P(X,G)
$$
is called the non-stable $K_1$-functor associated to $G$, or the Whitehead group of $G$.
Both names go back to Bass' founding paper~\cite{Bass}, where the case $G=\GL_n$ was considered.

It is known that the functor $K_1^{G,P}$ on affine $k$-schemes is independent of the choice
of a parabolic $k$-subgroup $P$ that intersects properly every semisimple normal subgroup of $G$.
If every semisimple normal subgroup of $G$ contains $(\Gm)^2$, then $K_1^{G,P}$
takes values in the category of groups. Also, it is $\A1$-invariant on $\Sm_k$ whenever
$k$ is perfect. We refer to~\cite{St-poly} for these and other basic properties of non-stable $K_1$-functors associated to
isotropic groups.

In the present paper we study the Nisnevich sheafification $K_{1,Nis}^{G,P}$ of $K_1^{G,P}$ on the
category $\Sm_k$, where $G$ a simply connected simple algebraic $k$-group of classical type $A_l$, $l\ge 2$, or $D_l$,
$l\ge 4$. Note that all our results also extend to groups of  type $B_l$ or $C_l$, $l\ge 2$,
however, for such groups $K_{1,Nis}^G$ is trivial on $\Sm_k$;
see~\cite{Gil} and Lemma~\ref{lem:inj} below.

The above-mentioned properties of $K_1^{G,P}$ imply that $K_{1,Nis}^{G,P}=K_{1,Nis}^G$
is group-valued, independent of the choice of $P$, and $\A1$-invariant.
We aim to prove that $K_{1,Nis}^G$ is a presheaf
with oriented weak transfers for affine varieties in the sense of the following definition of J.
Ross~\cite{Ross}, which is a slight
modification of a definition of I. Panin and S. Yagunov~\cite{PaY}.

\begin{defn}\cite[Definition 2.1]{Ross}\label{def:transfer}
Let $\mathcal{A}$ be an additive category, and let $F:\Sm_k^{op}\to\mathcal{A}$ be a presheaf. Assume that $F$ is additive
in the sense that $F(X_1\coprod X_2)\cong F(X_1)\oplus F(X_2)$. We say that $F$ has \emph{oriented weak transfers}
if for any $X,Y\in \Sm_k$, any finite flat generically \'etale morphism $f:X\to Y$,
 and any closed embedding of $Y$-schemes $\tau:X\hookrightarrow Y\times_k\Aff_k^n$ with trivial normal bundle,
there is a map $f_*^\tau:F(X)\to F(Y)$
satisfying the following properties.
\begin{enumerate}
\item The $f_*^\tau$'s are compatible with disjoint unions: if $X=X_1\coprod X_2$, $f_i:X_i\to Y$ and
$\tau_i:X_i\hookrightarrow Y\times_k\Aff_k^n$, $i=1,2$ are the
morphisms induced by $f$ and $\tau$, then the following diagram commutes:
\begin{equation*}
\xymatrix@R=15pt@C=40pt{
F(X)\ar[d]_{\cong}\ar[r]^{f_*^\tau}&F(Y)\\
F(X_1)\oplus F(X_2)\ar[ru]_{(f_{1*}^{\tau_1},f_{2*}^{\tau_2})}&\\
}
\end{equation*}

\item The $f_*^\tau$’s are compatible with sections $s:Y\to X$ which are isomorphisms
onto connected components of $X$. In the notation of the previous property,
if $s$ is an isomorphism onto $X_1$, then we require $f_{1*}^{\tau_1} = s^*$; in particular,
$f_{1*}^{\tau_1}$ is independent of $\tau_1$.

\item For any morphism $g:Y'\to Y$ which is either smooth, or a closed embedding of a principal smooth divisor,
the following diagram commutes:
\begin{equation*}
\xymatrix@R=15pt@C=40pt{
F(X)\ar[d]_{f_*^\tau}\ar[r]^{g'^*}&F(X\times_Y Y')\ar[d]^{(f')_*^{\tau'}}\\
F(Y)\ar[r]^{g^*}&F(Y')\\
}
\end{equation*}
Here $f':X\times_Y Y'\to Y'$,
$g':X\times_Y Y'\to X$, and $\tau':X'\hookrightarrow Y'\times_k\Aff_k^n$ are the natural morphisms obtained by base change.

\item $f_*^\tau$ is compatible with the addition of irrelevant summands: if
$$
(\tau,0):X\hookrightarrow Y\times_k\Aff_k^n\times_k\A1
$$
is the embedding induced by $\tau$ and $0\hookrightarrow\A1$, then $f_*^{\tau}$ and $f_*^{(\tau,0)}$
coincide.

\end{enumerate}
If $\mathcal{A}$ is the category of abelian groups $\Ab$, then all transfer maps are required to be group
homomorphisms.

One says that a presheaf $F:\Sm_k\to\mathcal{A}$ has \emph{weak transfers for affine varieties}, if
we are given weak transfers as described above whenever $X$ and $Y$ are affine.
\end{defn}


I. Panin and S. Yagunov introduced oriented weak transfers in order to further generalize
Suslin's rigidity theorem for $K$-theory with finite
coefficients~\cite{Sus83,GTh,Gab92}, which was
previously extended to $\Aff^1$-invariant torsion presheaves with transfers in the sense of Suslin--Voevodsky~\cite{SuVo,Vo-transf}.
It is proved in~\cite{PaY}
that orientable cohomology theories over algebraically closed fields possess oriented weak
transfers, and hence satisfy a rigidity theorem. These results were later generalized to presheaves with non-oriented weak transfers
over algebraically closed fields~\cite{Ya-II}, and to $\Aff^1$-stably representable cohomology theories over
infinite fields~\cite{HY}.

In~\cite{Ross} J. Ross, assuming that $k$ has characteristic $0$, extends to presheaves with weak transfers
the main results of~\cite{Vo-transf}. Namely, he shows that
Zariski and Nisnevich sheafifications of
an $\A1$-invariant presheaf on $\Sm_k$ with oriented weak transfers for affine varieties coincide,
and their Zariski and Nisnevich cohomology functors are again $\A1$-invariant presheaves with oriented
weak transfers. Also, such presheaves have Gersten resolutions.

Our main result is the following theorem.

\begin{thm}\label{thm:transf}
Let $k$ be a field. Let $G$ be
a simply connected simple algebraic group over $k$ of type $A_l$, $l\ge 2$, or $D_l$, $l\ge 3$,
 such that $G$ contains $(\Gm_{,k})^2$. If $G$ is of type $D_l$, assume moreover that $\Char k\neq 2$.
Then the functor $K_{1,Nis}^G(-)$ on the category $\Sm_k$
takes values in $\Ab$, and is an $\A1$-invariant functor with oriented weak transfers
for affine varieties.
\end{thm}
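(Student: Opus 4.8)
Throughout, the $\A1$-invariance and the fact that $K_{1,Nis}^G$ is abelian-group-valued are already granted by the results recalled in the introduction, so the entire content of the theorem is the construction of the oriented weak transfers and the verification of the four axioms of Definition~\ref{def:transfer}. The plan is to reduce to an explicit model of $G$ and then to obtain the transfers by restricting the classical (Bass) transfers of stable $K$-theory. First I would invoke the classification of simply connected simple groups of type $A_l$ and $D_l$ containing $(\Gm)^2$: up to isomorphism, in type $A_l$ such a $G$ is the special linear group $\SL_1(A)$ of a central simple $k$-algebra $A$ (inner type) or a special unitary group of a central simple algebra with an involution of the second kind (outer type), and in type $D_l$ it is the spinor group $\Spin(A,\sigma)$ of a central simple $k$-algebra with an orthogonal involution. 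The hypothesis that $G$ contains a two-dimensional split torus means exactly that the associated algebra is sufficiently isotropic (e.g. $A\cong M_m(D)$ with $m\ge 3$, resp. $\sigma$ of Witt index at least two), which is precisely what secures group-valuedness and the stability statements used below.

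Next, the standard representation of $A$ gives a closed embedding $G\hookrightarrow\GL(A)$ and hence a natural transformation $K_1^G\to K_1$ into the stable algebraic $K_1$-functor of the underlying algebra; by the injectivity statement recalled from~\cite{Gil} and Lemma~\ref{lem:inj}, after Nisnevich sheafification this becomes a monomorphism of sheaves whose image is cut out by reduced-norm and, in type $D_l$, by involution-invariance conditions. Stable algebraic $K$-theory carries transfers $f_*^\tau$ for every finite flat generically \'etale $f\colon X\to Y$ and every embedding $\tau$ with trivial normal bundle---the Bass transfers induced by viewing $f_*\mathcal O_X$ as a locally free $\mathcal O_Y$-module---and these are of oriented weak type in the sense of~\cite{PaY}. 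The heart of the argument is then to check that such a transfer preserves the image of $K_{1,Nis}^G$: this amounts to the compatibility of the Bass transfer with the reduced norm (the norm principle for finite flat morphisms) and, in type $D_l$, with the involution, after which one restricts $f_*^\tau$ to $K_{1,Nis}^G$.

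A point that must be addressed is that the Bass transfer a priori raises the size of matrices, so that the construction naturally lands in a stabilized group; the reason this causes no harm at the level of $K_{1,Nis}^G$ is that, by $\A1$-invariance, the non-stable and the stable reduced Whitehead sheaves coincide. Indeed, the classical stability theorems identify $K_1^G$ with its stable counterpart over all fields, and two $\A1$-invariant Nisnevich sheaves on $\Sm_k$ that agree on all finitely generated field extensions of $k$ are isomorphic (both being unramified); this lets one pass between the non-stable and stable pictures uniformly in the codimension of a point, which the naive stability bounds do not allow. Granting the construction, the four axioms are inherited from the oriented transfers on $K$-theory: additivity (1) and compatibility with sections (2) are formal; the base-change axiom (3) follows from compatibility of the Bass transfer with flat base change and with restriction to a principal smooth divisor, using flatness of $f$; and insensitivity to irrelevant summands (4) is the corresponding property on $K$-theory.

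The main obstacle is type $D_l$. There the relevant stable object is the Whitehead group of a spinor group, which is a subquotient of Hermitian (Grothendieck--Witt) $K$-theory rather than of $\GL$-oriented $K$-theory, so the transfers of~\cite{PaY} cannot simply be quoted; one must instead build the transfers directly on the orthogonal/spinor side and verify by hand that the resulting maps satisfy the axioms of Definition~\ref{def:transfer} and are compatible with the Clifford-algebra description of $\Spin(A,\sigma)$. This is exactly where the hypothesis $\Char k\neq 2$ enters, being needed for the theory of quadratic forms and for the Clifford algebra to behave well. A second, more technical, difficulty is to make the comparison between non-stable and stable $K_1$ genuinely uniform at the sheaf level, so that the transfers are well defined on $K_{1,Nis}^G(Y)$ for arbitrary smooth affine $Y$ and not merely on low-dimensional pieces; this is precisely the step where the previously established $\A1$-invariance of $K_{1,Nis}^G$ is indispensable.
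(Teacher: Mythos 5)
Your proposal diverges from the paper's proof in an essential way, and the divergence is where the gap lies. The paper does not construct transfers by embedding $G$ into stable $K$-theory; it proves a general result (Theorem~\ref{thm:transf-gen}) whose input is, uniformly for all the types covered, an exact sequence $1\to G\to H\xrightarrow{\mu} T\to 1$ with $H$ a \emph{$k$-rational} reductive group and $T$ a torus, such that $\mu$ satisfies Merkurjev's norm principle over all extensions of $k$. The transfer for a finite flat generically \'etale $f:A\to B$ is then built from Weil restriction $G'=R_{B/A}(G_B)$, Panin's norm map $N_{B/A}:R_{B/A}(T_B)\to T_A$ on tori, and the Chernousov--Merkurjev rational construction (write an element of $H'$ as a product of two points of a dense open $U$ on which a rational lift $\eta$ of the norm exists), extended from fields to Henselian local rings (Lemmas~\ref{lem:Ueta}, \ref{lem:K1transf}) and then glued over arbitrary smooth affine schemes by a Nisnevich-descent induction (Lemma~\ref{lem:Aring}). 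Your route --- restrict Bass transfers on $K_1$ of an Azumaya algebra to the reduced-norm-one subsheaf --- can plausibly be carried out for \emph{inner} type $A_l$, where $K_1^G$ really is the kernel of $\mathrm{Nrd}$ on $K_1(A\otimes-)$ and transfer--norm compatibility is available. But for type $D_l$ you explicitly defer the construction (``build the transfers directly on the orthogonal/spinor side and verify by hand''), and this is precisely the hard half of the theorem, so what you have there is a placeholder, not a proof. The obstruction is structural: $\Spin(A,\sigma)$ is not a subgroup of the unit group of any algebra attached to the form --- it is a double cover of $\mathrm{SO}(A,\sigma)$ with kernel $\mu_2$ --- so $K_1^{\Spin}$ is not a subsheaf of Hermitian or $\GL$-type $K_1$ cut out by norm conditions in any transfer-compatible way. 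The paper's exact sequence resolves exactly this: for $D_l$ the group $H$ is the (rational) extended Clifford group of~\cite{ChM-su}, and the characteristic-$2$ exclusion comes from the fact that Merkurjev's norm principle~\cite{M-norm} for this $\mu$ is unknown in characteristic $2$ --- not merely from ``quadratic form theory behaving well.''

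There is a second genuine gap in your stabilization step. The claim that ``two $\Aff^1$-invariant Nisnevich sheaves on $\Sm_k$ that agree on all finitely generated field extensions of $k$ are isomorphic (both being unramified)'' is unjustified and, as stated, not usable: agreement on fields does not determine an unramified sheaf unless one also matches the codimension-one data (sections over discrete valuation rings), and unramifiedness of the stable sheaf is itself something you would have to prove. The correct version of your reduction is stalk-wise: non-stable/stable comparison over Henselian local essentially smooth rings would identify the two sheaves on Nisnevich stalks and hence globally, with no mention of fields. But that comparison is exactly Yu's stabilization theorem~\cite{Yu-stab}, which for the outer and orthogonal types requires Petrov's odd unitary group machinery --- the paper notes in the introduction that it deliberately avoids this route. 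Two smaller omissions: your classification misses trialitarian $D_4$ (the paper disposes of it, and of finite $k$, separately: such a $G$ containing $(\Gm_{,k})^2$ is quasi-split, so $K_{1,Nis}^G$ is trivial by Gille's theorem and Lemma~\ref{lem:inj}), and you never verify property (3) of Definition~\ref{def:transfer} for non-flat base change beyond asserting it is formal, whereas the paper gets it from the functoriality built into Lemmas~\ref{lem:K1transf} and~\ref{lem:Aring}.
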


Note that, according to a recent result of A. Asok, M. Hoyois and M. Wendt~\cite[Corollaries 4.3.3 and 4.3.4]{AHW15},
for any infinite field $k$ and any isotropic reductive $k$-group $G$ the Zariski sheafification $K_{1,Zar}^G$ on $\Sm_k$
coincides with $K_{1,Nis}^G$. However, our proof of Theorem~\ref{thm:transf} uses Nisnevich topology, so
we keep the notation $K_{1,Nis}^G$.

Our construction of oriented weak transfers for $K_{1,Nis}^G$ is inspired by the construction of the norm homomorphism for $R$-equivalence
class groups of classical groups due to V. Chernousov and A. Merkurjev~\cite{ChM-su}. In particular,
their norm homomorphisms are precisely the oriented weak transfer maps for finite extensions of fields. By construction,
our transfers are independent of the choice of a closed emebedding $\tau$ of Definition~\ref{def:transfer}.
The characteristic assumption in the $D_l$ case is solely due to the fact that Merkurjev's norm principle
is not known for groups of type $D_l$ in characteristic $2$; see Theorem~\ref{thm:transf-gen} below.

We note that it should be possible to assume in Theorem~\ref{thm:transf}
that $G$ contains $\Gm_{,k}$ instead of $(\Gm_{,k})^2$.
In fact, one can show that for any isotropic simply connected simple algebraic group
$H$ of classical type over $k$ there is a group $G$ over $k$ satisfying the assumptions of Theorem~\ref{thm:transf}
and such that $K_{1,Nis}^G=K_{1,Nis}^{H}$ on $\Sm_k$. This follows from the
stabilization of non-stable $K_1$-functors of classical types on local rings~\cite{Yu-stab}. However,
in order to apply the latter result, we would have had to invoke the voluminous language of
Petrov's unitary groups which is unnecessary for our results here, so we chose not to do that.




It is not hard to prove that $K_{1,Nis}^G$ is not only $\Aff^1$- but also $\Gm$-invariant (see Lemma~\ref{lem:inj}).
The results of~\cite{Ross} and Theorem~\ref{thm:transf} then imply that $K_{1,Nis}^G$ is a birationally invariant sheaf.

\begin{cor}\label{cor:bir}
In the setting of Theorem~\ref{thm:transf}, assume moreover that $k$ has characteristic $0$. Then for
any smooth $k$-scheme $X$, one has
$$
K_{1,Nis}^G(X)\cong \prod_{i=1}^k K_{1,Nis}^G(k(X_i)),
$$
where $X_i$, $1\le i\le k$, are the connected components of $X$.
In particular, $H^n_{Nis}(X,K_{1,Nis}^G)=0$ for any n>0.
\end{cor}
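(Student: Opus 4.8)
The plan is to read off both the product formula and the vanishing of higher cohomology from the Gersten resolution furnished by~\cite{Ross}, using crucially that $K_{1,Nis}^G$ is $\Gm$-invariant and not merely $\A1$-invariant.

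First I would recall Voevodsky's contraction: for a presheaf $F$ on $\Sm_k$ one sets
$$
F_{-1}(U)=\coker\bigl(F(U)\xra{\mathrm{pr}^*}F(U\times_k\Gm)\bigr),
$$
the unit section $U\hookrightarrow U\times_k\Gm$ providing a natural splitting $F(U\times_k\Gm)\cong F(U)\oplus F_{-1}(U)$. Since Lemma~\ref{lem:inj} asserts that $\mathrm{pr}^*$ is an isomorphism, we get $(K_{1,Nis}^G)_{-1}=0$ as a presheaf, and hence, iterating, $(K_{1,Nis}^G)_{-p}=0$ for every $p\ge 1$.

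Next I would invoke Theorem~\ref{thm:transf}, by which $K_{1,Nis}^G$ is an $\A1$-invariant Nisnevich sheaf with oriented weak transfers for affine varieties, so that the machinery of~\cite{Ross} applies in characteristic $0$: the coniveau spectral sequence of a smooth scheme $X$ degenerates and produces a Gersten resolution of $K_{1,Nis}^G$. For irreducible $X$ the resulting complex has, in codimension $p$, the term $\bigoplus_{x\in X^{(p)}}(K_{1,Nis}^G)_{-p}(k(x))$, its degree-$0$ term being $K_{1,Nis}^G(k(X))$. Substituting the vanishing of all positive contractions collapses this complex to its single degree-$0$ term, which gives $K_{1,Nis}^G(X)\cong K_{1,Nis}^G(k(X))$ together with $H^n_{Nis}(X,K_{1,Nis}^G)=0$ for all $n>0$. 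For a general smooth $X$ I would then split off its connected components $X_i$ and apply the additivity $K_{1,Nis}^G(X_1\coprod X_2)\cong K_{1,Nis}^G(X_1)\oplus K_{1,Nis}^G(X_2)$ built into Definition~\ref{def:transfer}, turning the finite direct sum over the $X_i$ into the asserted product.

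The hard part will be to confirm that the Gersten resolution built in~\cite{Ross} out of oriented weak transfers carries exactly the same contraction structure as in the Suslin--Voevodsky setting of~\cite{Vo-transf}, i.e.\ that the purity identification of the codimension-$p$ local-cohomology term with $(K_{1,Nis}^G)_{-p}(k(x))$ survives. This is precisely the portion of~\cite{Vo-transf} that Ross re-establishes for weak transfers, so once one checks that the relevant Gysin maps and the computation of the contractions do not see the difference between the two kinds of transfers, the collapse of the complex proceeds verbatim.
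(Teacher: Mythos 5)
Your proposal is correct and follows essentially the same route as the paper: the paper likewise invokes the Gersten-type resolution of~\cite[Theorem 6.14]{Ross} (made available by Theorem~\ref{thm:transf}) and then observes that the contraction $(K_{1,Nis}^G)_{-1}$ vanishes by the $\Gm$- and $\A1$-invariance of Lemma~\ref{lem:inj}, so the complex collapses to its degree-zero term. Your extra steps (iterating the contraction to kill all codimension-$p$ terms and splitting off connected components) are exactly the details the paper leaves implicit.
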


Note that Corollary~\ref{cor:bir} implies that $H^n_{Nis}(X,K_{1,Nis}^G)=0$ for any $n>0$
and any smooth $k$-scheme $X$ (e.g. by a lemma of J. Riou~\cite[Lemma 4.1.4]{KL}); that is,
$K_{1,Nis}^G$ is strictly $\Aff^1$-invariant in the sense of F. Morel~\cite{Mo-book}.
The same result was also obtained by A. Asok, M. Hoyois and M. Wendt for the case of an arbitrary perfect field $k$
using different methods~\cite{AHW15}.

One can also extend to $\Aff^1$-invariant presheaves with oriented weak transfers the
Henselian local ring version of Suslin's rigidity theorem
by closely imitating the proofs of rigidity theorems in~\cite{SuVo,Gab92,HY}; see Theorem~\ref{thm:HY3.1} below.
As a corollary, we obtain  the following result. The corresponding statement was previously known
for general isotropic simply connected groups $G$ over Henselian
discrete valuation rings~\cite{Gil-spec},\cite[Corollaire 7.3]{Gil}, and for groups $G$ of type $A_l$
over arbitrary Henselian local rings~\cite{Haz-hp,Haz-hpu}.

\begin{cor}\label{cor:rigidity}
In the setting of Theorem~\ref{thm:transf}, assume that $G$ is of type $D_l$, $l\ge 4$, and  $\Char k\neq 2$.
Let $(R,m)$ be a Henselian regular local ring such that $k\cong R/m$ is a coefficient subfield of $R$.
Then there is a natural isomorphism
$$
K_{1}^G(R)\cong K_{1}^G(k).
$$
\end{cor}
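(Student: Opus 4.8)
The plan is to deduce the isomorphism from the torsion rigidity theorem (Theorem~\ref{thm:HY3.1}), using as the decisive extra input that for type $D_l$ the functor $K_1^G$ is $2$-torsion and hence, since $\Char k\neq 2$, a torsion presheaf of order prime to the residue characteristic.

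First I would reduce the statement to injectivity. Because $k$ is a coefficient subfield, the inclusion $k\hookrightarrow R$ is a section of the reduction $R\to R/m\cong k$, so it induces a splitting $\iota\colon K_1^G(k)\to K_1^G(R)$ of the reduction homomorphism $\rho\colon K_1^G(R)\to K_1^G(k)$. Thus $\rho$ is split surjective, $\iota$ is injective, and both maps are natural in $R$; it remains only to show $\ker\rho=0$, after which $\rho$ and $\iota$ are mutually inverse natural isomorphisms.

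Next I would verify the torsion hypothesis. For every field $F$ it is known that $K_1^G(F)=G(F)/E_P(F)$ coincides with the $R$-equivalence class group $G(F)/R$, whose norm homomorphisms are exactly the oriented weak transfers for finite field extensions built in Theorem~\ref{thm:transf} (cf.~\cite{ChM-su}). For type $D_l$ this group is $2$-torsion---it is controlled by spinor-norm data with values in $F^\times/F^{\times 2}$---so, as $\Char k\neq 2$, the presheaf $K_1^G$ is torsion of order prime to $\Char k$. This is precisely where the hypotheses ``type $D_l$'' and $\Char k\neq 2$ are used, and it explains why type $A_l$, for which $K_1^G=\mathrm{SK}_1$ need not be torsion, is excluded here and treated separately in~\cite{Haz-hp}.

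With these ingredients in hand, Theorem~\ref{thm:transf} supplies the remaining hypotheses---$K_1^G$ is $\Aff^1$-invariant and carries oriented weak transfers for affine varieties---so applying Theorem~\ref{thm:HY3.1} to this prime-to-$\Char k$ torsion presheaf yields $\ker\rho=0$ and completes the proof. I expect the main obstacle to be a geometric matching step: the rigidity theorem is established for Henselizations of local rings of smooth affine $k$-varieties, whereas $R$ is only assumed Henselian regular with coefficient field $k$. To bridge this I would argue by continuity, using N\'eron--Popescu desingularization (valid since $R$ is regular and equicharacteristic) to write $R$ as a filtered colimit of smooth local $k$-algebras with residue field $k$, passing to Henselizations, and invoking that $K_1^G$ commutes with filtered colimits because $G$ is finitely presented; rigidity for each term then descends to $R$. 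Verifying this approximation compatibly with the coefficient fields is the delicate point, but once it is granted Theorem~\ref{thm:HY3.1} closes the argument.
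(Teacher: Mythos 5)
Your overall architecture coincides with the paper's: reduce to the kernel of the reduction map via the coefficient-field splitting, feed torsion, $\Aff^1$-invariance and the transfers of Theorem~\ref{thm:transf} into the rigidity machinery, and handle a general Henselian regular local $R$ by N\'eron--Popescu approximation, henselization and passage to filtered colimits (this last step is exactly what the paper does). However, there is a genuine gap at the decisive point: the torsion hypothesis. You assert that for type $D_l$ the group $K_1^G(F)$ is $2$-torsion because it is ``controlled by spinor-norm data with values in $F^\times/F^{\times 2}$''. This is not a proof, and the claim as stated is not available: a simply connected group of type $D_l$ satisfying the hypotheses need not be the spinor group of a quadratic form --- its first Tits algebra can be a nonsplit central simple algebra of index $d=2^m>2$. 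What the paper actually proves (Lemma~\ref{lem:Dtorsion}) is the weaker statement that $K_{1,Nis}^G$ is $d$-torsion, where $d$ is the degree of an Azumaya algebra representing the first Tits algebra: one passes to a separable splitting field $K'/K$ of degree $d$, over which $G_{K'}\cong\Spin(q)$ with $q$ isotropic, so $K_1^G(K')=1$ by~\cite[Th\'eor\`eme 6.1]{Gil}, and then applies the projection formula $i^*\circ i_*(x)=x^{d}$ of~\cite[Proposition 4.4]{ChM-su}. Since the classification of Tits indices gives $d=2^m$, this is still prime to $\Char k$ and suffices; but without this lemma (or an equivalent argument) your verification of the hypothesis $l\CF=0$ with $l\in k^\times$ is simply missing.

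Two further points. First, Theorems~\ref{thm:HY3.1} and~\ref{thm:rigidity-main} require $k$ to be infinite, so your argument cannot run when the coefficient field is finite; the paper disposes of that case (and of the trialitarian type ${}^{3(6)}D_4$, to which the Tits-index torsion argument does not apply) by observing that $G$ is then quasi-split, whence $K_{1,Nis}^G$ is trivial on fields by~\cite[Th\'eor\`eme 6.1]{Gil} and hence trivial by Lemma~\ref{lem:inj}. Second, the theorem you should invoke is Theorem~\ref{thm:rigidity-main}, not Theorem~\ref{thm:HY3.1}: the latter only asserts equality of two compositions into $\CF(K)$ for sections of a relative curve, and promoting it to the isomorphism $\CF(k)\cong\CF(R)$ is precisely the content of Theorem~\ref{thm:rigidity-main}, whose additional hypothesis that $K_{1,Nis}^G(R)\to K_{1,Nis}^G(K)$ be injective (supplied by Lemma~\ref{lem:inj}) you never verify.
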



I am indebted to Ivan Panin for pointing out that norm maps for algebraic tori can be defined
not only for field extensions, but also for finite flat morphisms of affine schemes. Also, I would like to thank
Alexey Ananyevskiy for several helpful comments on rigidity theorems.

\section{Nisnevich sheafification of a non-stable $K_1$-functor}

In order to prove Theorem~\ref{thm:transf}, we need to consider non-stable $K_1$-functors in a more general
situation than in the introduction. Let $R$ be a commutative ring with 1, and let $G$ be a reductive scheme over $R$ in the sense
of~\cite{SGA3}.

\begin{defn}
A parabolic subgroup $P$ in $G$ is called
\emph{strictly proper}, if it intersects properly every normal semisimple subgroup scheme of $G$.
\end{defn}

If $R$ is local, then $G$ contains a strictly proper parabolic $R$-subgroup if and only if
every semisimple normal $R$-subgroup scheme of $G$ contains $\Gm_{,R}$ by~\cite[Proposition 6.16]{SGA3}.

Assume that $G$ has a parabolic $R$-subgroup $P$ that is strictly proper. Since the base $\Spec R$ is affine,
$P$ has a Levi $R$-subgroup $L$, and there is a unique opposite parabolic $R$-subgroup $P^-$ of $G$
such that $P\cap P^-=L$~\cite[Exp. XXVI, Cor. 2.3 and Th. 4.3.2]{SGA3}. For any $R$-scheme $X$,
we set
$$
E_P(X)=\l<U_P(X),U_{P^-}(X)\r>\subseteq G(X),
$$
where $U_P$ and $U_{P^-}$ are the unipotent radicals of $P$ and $P^-$. Since any two Levi $R$-subgroups of $P$
are conjugate by an element of $U_P(R)$ by~\cite[Exp. XXVI, Cor. 1.8]{SGA3}, the group $E_P(X)$ is indeed
independent of the choice of $L$ and $P^-$.
The quotient
$$
G(X)/E_P(X)=K_1^{G,P}(X)=W_P(X,G)
$$
is again called the non-stable $K_1$-functor associated to $G$, or the Whitehead group of $G$. It is a functor
on the category of $R$-schemes.

If $R$ is a semilocal ring, or if every semisimple normal subgroup of $G$ contains
$(\Gm_{,k})^2$ locally in Zariski topology on $\Spec R$,
then for any $R$-algebra $A$, the group $E_P(A)=E(A)$ is independent of the choice of a strictly proper
parabolic $R$-subgroup $P$~\cite{SGA3,PS}, see~\cite[Theorem 2.1]{St-poly}.
In this case $K_1^{G,P}(A)$ is a group, since a
conjugate of a strictly proper parabolic subgroup is a strictly proper parabolic subgroup.

For any Noetherian commutative ring $R$ of finite dimension, we denote by $\Sm_R$ the category of
Noetherian smooth schemes of finite type over $R$. We consider $\Sm_R$ as a site with Nisnevich topology.
For any reductive group scheme $G$ over $R$ with a parabolic $R$-subgroup $P$ we denote by $K_{1,Nis}^{G.P}$
the Nisnevich sheafification of $K_1^{G,P}$ on $\Sm_R$.
Note that  $K_1^{G,P}$ extends by continuity to the category of Noetherian essentially smooth $R$-schemes.
This extension is well-defined thanks to~\cite[Corollaire 8.13.2]{EGAIV-3}, and compatible with Nisnevich
sheafification e.g. by~\cite[\href{http://stacks.math.columbia.edu/tag/00XI}{Tag 00XI}]{Stacks}.

\begin{lem}\label{lem:K1group}
Let $R$ be a smooth $k$-algebra, where $k$ is a field.
Let $G$ be a reductive group scheme over $R$ having a strictly
proper parabolic $R$-subgroup.
Then the functor $K_{1,Nis}^{G,P}=K_{1,Nis}^G$ on $\Sm_R$
is independent of the choice of a strictly proper parabolic $R$-subgroup $P$ of $G$, and takes values
in the category of groups.
\end{lem}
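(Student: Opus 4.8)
The plan is to reduce both assertions to Nisnevich stalks, where the base ring is local and the structural results recorded above apply. Recall that the small Nisnevich site of $\Sm_R$ has enough points, given by the Henselian local rings $\mathcal{O}_{X,x}^h$ of smooth $R$-schemes, that a morphism of Nisnevich sheaves is an isomorphism precisely when it is so on every stalk, and that sheafification is exact and preserves stalks. Since $G$, $U_P$ and $U_{P^-}$ are of finite presentation over $R$, the presheaves $G$, $E_P$ and $K_1^{G,P}=G/E_P$ all commute with filtered colimits of $R$-algebras; combined with the continuity of $K_1^{G,P}$ noted before the lemma, this identifies the stalk of $K_{1,Nis}^{G,P}$ at a point $x\in X$ with $K_1^{G,P}(\mathcal{O}_{X,x}^h)$, and likewise the stalk of $(E_P)_{Nis}$ with the subgroup $E_P(\mathcal{O}_{X,x}^h)\subseteq G(\mathcal{O}_{X,x}^h)$.

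Write $\mathcal{E}_P=(E_P)_{Nis}$. As sheafification preserves the monomorphism $E_P\hookrightarrow G$ and $G$ is already a Nisnevich sheaf, $\mathcal{E}_P$ is a subsheaf of groups of $G$; and as sheafification preserves the coset quotient, $K_{1,Nis}^{G,P}=G/\mathcal{E}_P$. Now fix a point $x$, so that $A:=\mathcal{O}_{X,x}^h$ is local and $P$ base-changes to a strictly proper parabolic over $A$. By \cite[Theorem 2.1]{St-poly}, the subgroup $E_P(A)\subseteq G(A)$ is independent of the choice of strictly proper parabolic, and consequently normal: for $g\in G(A)$ one has $gE_P(A)g^{-1}=E_{gPg^{-1}}(A)=E_P(A)$, since $gPg^{-1}$ is again strictly proper. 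Normality of $\mathcal{E}_P$ in $G$ may be checked on stalks, so $\mathcal{E}_P$ is a normal subsheaf and $K_{1,Nis}^{G,P}=G/\mathcal{E}_P$ is a sheaf of groups. For independence of $P$, note that two subsheaves of $G$ with the same stalks coincide; since the stalks of $\mathcal{E}_{P_1}$ and $\mathcal{E}_{P_2}$ are the subgroups $E_{P_1}(\mathcal{O}_{X,x}^h)=E_{P_2}(\mathcal{O}_{X,x}^h)$ of $G(\mathcal{O}_{X,x}^h)$, we get $\mathcal{E}_{P_1}=\mathcal{E}_{P_2}$, whence $K_{1,Nis}^{G,P_1}=K_{1,Nis}^{G,P_2}$. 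This justifies writing $K_{1,Nis}^G$.

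The main obstacle is conceptual rather than computational: on a general, non-local smooth $R$-scheme $X$ the presheaf $K_1^{G,P}$ is only a pointed set and genuinely depends on $P$, so both conclusions fail before sheafification and can only be recovered at the sheaf level. The crux is therefore to argue that the two facts available on local rings by \cite[Theorem 2.1]{St-poly} — normality of $E_P$ and its independence of $P$ — are stalk-local properties, and hence survive sheafification. This in turn relies on the Nisnevich site having enough points that are local rings, on the exactness of the associated-sheaf functor, and on the finite presentation of $G$ that makes $K_1^{G,P}$ continuous.
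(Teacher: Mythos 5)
Your proof is correct and takes essentially the same route as the paper: both arguments sheafify $E_P$ inside the sheaf $G$ using exactness of sheafification, and then verify independence of $P$ and normality on Nisnevich stalks (Henselian local rings), where \cite[Theorem 2.1]{St-poly} applies, concluding via the fact that subsheaves of $G$ with equal stalks coincide. You merely spell out the stalk identifications and the conjugation argument for normality that the paper's proof leaves implicit.
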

\begin{proof}
Let $P$ be a strictly proper parabolic $R$-subgroup of $G$. Since the presheaf $E_P$ is a subgroup presheaf
of $G$, and the sheafification functor (on presheaves of sets) is exact, its Nisnevich sheafification $E_{P,Nis}$
is a subgroup presheaf of the Nisnevich sheaf $G$.
If $Q$ is another strictly proper parabolic $R$-subgroup of $G$, then
$E_{P,Nis}=E_{Q,Nis}$ as sheaves on the category of smooth $R$-schemes, since they are both subsheaves of $G$
and coincide on stalks (which are local rings). In particular, $E_{P,Nis}$ is a subsheaf of normal subgroups of the sheaf $G$ on the category
of all smooth $R$-schemes.

On the other hand, $K_{1,Nis}^{G,P}$ is also the Nisnevich sheafification of the quotient
$G/E_{P,Nis}$. Therefore, $K_{1,Nis}^{G,P}=K_{1,Nis}^{G}$ is group-valued and independent of the choice of $P$.
\end{proof}

Let $k$ be a field, and let $R$ be a regular ring containing $k$. Let $G$ be
a reductive group over $k$, such that every semisimple normal subgroup of $G$ contains
$(\Gm_{,k})^2$. If $k$ is perfect, by~\cite[Theorem 1.3]{St-poly}  the natural inclusion $R\to R[t]$ induces
an isomorphism
\begin{equation}\label{eq:HI}
K_1^G(R)\cong K_1^G(R[t]).
\end{equation}

Assume that $k$ is infinite and $R$ is local regular $k$-algebra, and $K$ is the field of fractions of $R$.
Assume moreover that $k$ is perfect or that $A$ is a local ring of a smooth algebraic variety over $k$.
Then the natural map
$$
K_1^G(R)\to K_1^G(K)
$$
is injective by~\cite[Theorem 1.4]{St-poly}. We extend this result to the case of a finite $k$.

\begin{lem}\label{lem:fin-inj}
Let $k$ be a finite field, and let $G$ be
a reductive group over $k$, such that every semisimple normal subgroup of $G$ contains
$(\Gm_{,k})^2$. Let $R$ be a regular local ring containing $k$, and let $K$ be the field of fractions of $R$. Then
$$
K_1^G(R)\to K_1^G(K)
$$
is injective.
\end{lem}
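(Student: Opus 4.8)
The plan is to reduce the finite-field case to the infinite-field case, which is already known by~\cite[Theorem 1.4]{St-poly}, via the classical device of comparing two infinite algebraic extensions of coprime pro-degree together with norm (transfer) maps. Throughout I use that, under the standing hypothesis that every semisimple normal subgroup of $G$ contains $(\Gm_{,k})^2$, the functor $K_1^G$ is abelian-group valued --- since $E_P(A)$ contains the commutator subgroup of $G(A)$, see~\cite{St-poly} --- and that $K_1^G$ commutes with filtered colimits of rings.

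Write $k=\FF_q$ and fix two distinct primes $\ell_1\neq\ell_2$. For $i=1,2$ let $k_i\subset\bar k$ be the unique $\ZZ_{\ell_i}$-extension of $k$, i.e. $k_i=\bigcup_{n}\FF_{q^{\ell_i^n}}$. Each $k_i$ is an infinite (perfect) field, and every finite subextension of $k_i/k$ has degree a power of $\ell_i$. Let $x\in K_1^G(R)$ lie in the kernel of $K_1^G(R)\to K_1^G(K)$. It suffices to produce powers $d_i$ of $\ell_i$ with $x^{d_1}=x^{d_2}=1$: since $\gcd(d_1,d_2)=1$, the order of $x$ divides both, hence equals $1$, and $x=1$.

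First I would base change along $R\to S_i:=R\otimes_k k_i$, a filtered colimit of finite étale $R$-algebras of $\ell_i$-power degree, regular because $k$ is perfect. Choosing a maximal ideal $\mathfrak n$ of $S_i$ and localizing, $A:=(S_i)_{\mathfrak n}$ is a regular local $k_i$-algebra whose fraction field contains $K$; as $x$ dies in $K_1^G(K)$ it dies in $K_1^G(\mathrm{Frac}(A))$, so \cite[Theorem 1.4]{St-poly}, applied over the infinite field $k_i$, yields $x\mapsto 1$ in $K_1^G(A)$. Expressing $A$ as the filtered colimit of the corresponding local factors of the rings $R\otimes_k\FF_{q^{\ell_i^n}}$ and invoking continuity of $K_1^G$, the image of $x$ becomes trivial already at a finite stage, namely in $K_1^G(B_i)$ for a suitable finite flat $R$-algebra $B_i$ of $\ell_i$-power degree $d_i$ cut out by $R\otimes_k\FF_{q^{\ell_i^{n}}}$. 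Finally I would apply the norm map $N_{B_i/R}\colon K_1^G(B_i)\to K_1^G(R)$ attached to the finite flat morphism $R\to B_i$: by the projection formula $N_{B_i/R}\circ\res$ equals multiplication by $d_i$, so $x^{d_i}=N_{B_i/R}(\res x)=N_{B_i/R}(1)=1$, as needed.

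The main obstacle is twofold. First, the argument requires norm maps $N_{B/R}$ for $K_1^G$ along finite flat morphisms of affine schemes satisfying the degree formula $N_{B/R}\circ\res=\deg(B/R)$; over field extensions these are the norm homomorphisms of Chernousov--Merkurjev~\cite{ChM-su}, and their extension to finite flat morphisms of affine schemes is exactly the point singled out in the acknowledgments, which must be set up here in the requisite generality. Second, because $R$ need not be Henselian, the base changes $R\otimes_k\FF_{q^{\ell_i^n}}$ are semilocal and possibly reducible, and the delicate step is to marry the localization $A$ --- forced on us by the infinite-field input, which is stated for \emph{local} rings --- with the finiteness needed for the degree formula. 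Concretely, one must descend the triviality of $x$ from $A$ to an honest finite flat $R$-algebra $B_i$ of prime-power degree on \emph{all} of which $x$ vanishes; controlling the maximal ideals of $S_i$ (whose number is bounded precisely when the field of constants of $R/m$ meets $k_i$ in a finite extension) and guaranteeing that $d_i$ is genuinely a power of $\ell_i$ is where the real bookkeeping lies.
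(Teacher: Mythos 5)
Your overall strategy --- pass to an infinite algebraic extension where \cite[Theorem 1.4]{St-poly} applies, then descend --- founders on the descent step, and the obstacle you yourself flag is not bookkeeping but a missing theory. Your argument needs norm maps $N_{B/R}\colon K_1^G(B)\to K_1^G(R)$ for finite flat $R$-algebras $B$ over the \emph{finite} field $k$, for an \emph{arbitrary} reductive $G$ satisfying the hypotheses of the lemma, together with the projection formula $N_{B/R}\circ\mathrm{res}=\deg(B/R)$. No such maps exist in this generality: constructing transfers for $K_1^G$ is the main content of this paper, and even there (Theorem~\ref{thm:transf-gen}) they are built only for groups fitting into a sequence $1\to G\to H\to T\to 1$ with $H$ rational and Merkurjev's norm principle available, and only over an \emph{infinite} base field; moreover the projection formula is invoked (from \cite[Proposition 4.4]{ChM-su}, in Lemma~\ref{lem:Dtorsion}) only for finite separable \emph{field} extensions and the specific classical groups treated there. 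Worse, the natural way to prove a projection formula over the ring $R$ would be to embed $K_1^G(R)$ into $K_1^G(K)$ and reduce to the field case --- i.e., it presupposes exactly the injectivity this lemma asserts, so the route is circular. There is also the secondary gap you acknowledge: triviality of $x$ in the localization $A=(S_i)_{\mathfrak n}$ only gives, at a finite stage, triviality in a localization of $R\otimes_k\FF_{q^{\ell_i^n}}$ at one maximal ideal, not in the finite flat $R$-algebra $R\otimes_k\FF_{q^{\ell_i^n}}$ itself, which is semilocal but in general not local (its maximal ideals correspond to the factors of $(R/m)\otimes_k\FF_{q^{\ell_i^n}}$); so even granting norms, there is no $B_i$ to apply them to. (A minor further inaccuracy: abelianness of $K_1^G$ is not known for general reductive $G$; it is established in the paper only via \cite[Lemma 1.2]{ChM-su} under the sequence hypothesis. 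Your coprimality argument survives this, since it only uses element orders.)

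For comparison, the paper's proof avoids transfers entirely. Instead of an algebraic extension it uses the infinite field $k((t))$: after a Popescu reduction to $R$ essentially of finite type over $k$, one localizes $R\otimes_k k((t))$ at a maximal ideal to get $R'$, where \cite[Theorem 1.4]{St-poly} gives injectivity of $K_1^G(R')\to K_1^G(K')$. The descent problem then becomes injectivity of the \emph{base-change} map $K_1^G(A)\to K_1^G(A((t)))$ for $A\in\{R,K\}$, and this is proved with no norms: homotopy invariance gives $K_1^G(A[t])=K_1^G(A)$, \cite[Corollary 3.4]{St-serr} gives injectivity of $K_1^G(A[t,t^{-1}])\to K_1^G(A((t)))$, and the retraction of $A\to A[t,t^{-1}]$ finishes; a diagram chase then yields the lemma in its full generality. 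If you wish to salvage a coprime-degree transfer argument, you would first have to construct transfers with a projection formula over finite fields for all reductive groups satisfying the lemma's hypotheses --- a statement strictly stronger than anything established in this paper.
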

\begin{proof}
The embedding $k\to R$ is geometrically regular, since $k$ is perfect. Hence by
by Popescu's theorem~\cite{Po90,Swan} $R$ is a filtered direct limit of regular local $k$-algebras essentially
of finite type. Thus, since $K_1^G$ commutes with filtered direct limits, we can assume from the start that
$R$ is a regular local $k$-algebra essentially of finite type. Then $R\otimes_k k((t))$ is
a regular algebra essentially of finite type over the infinite field $k((t))$. Let $m$ be the maximal ideal of $R$,
and let $n$ be the maximal
ideal of $R\otimes_k k((t))$ containing $m\otimes_k k((t))$. Set $R'=\bigl(R\otimes_k k((t))\bigr)_n$.
Let $K'$ denote the field of fractions of $R'$.
The map
\begin{equation}\label{eq:RK'}
K_1^G(R')\to K_1^G(K')
\end{equation}
is injective by~\cite[Theorem 1.4]{St-poly}.

Let $A$ be one of $R$, $K$, and let $A'$ be $R'$ or $K'$ respectively.
Note that
By~\cite[Theorem 1.3]{St-poly} one has $K_1^G(A[t])=K_1^G(A)$. Then
by~\cite[Corollary 3.4]{St-serr} the natural homomorphism
$$
K_1^G(A[t,t^{-1}])\to K_1^G\bigl(A((t))\bigr)
$$
is injective.
Since the natural inclusion $A\to A[t,t^{-1}]$ has a section, we conclude that
$$
K_1^G(A)\to K_1^G\bigl(A((t))\bigr)
$$
is injective. Since the natural homomorphism $A\to A((t))$ factors through $A'$, this implies that
$$
K_1^G(A)\to K_1^G(A')$$
is also injective. Then the injectivity of~\eqref{eq:RK'} implies that of $K_1^G(R)\to K_1^G(K)$.
\end{proof}



\begin{lem}\label{lem:inj}
Let $k$ be a field, and let $G$ be
a reductive group over $k$, such that every semisimple normal subgroup of $G$ contains
$(\Gm_{,k})^2$.

$(i)$ For any $X\in\Sm_k$, the natural map
$$
K_{1,Nis}^G(X)\to \prod_{i=1}^k K_{1,Nis}^G(k(X_i))
$$
is injective, where $X_i$, $1\le i\le n$, are the connected components of $X$.

$(ii)$ If $G$ is simply connected semisimple, then the functor $K_{1,Nis}^G$ on $\Sm_k$ is $\A1$-invariant
and  $\Gm$-invariant, that is, $K_{1,Nis}^G(\Gm_{,X})\cong K_{1,Nis}^G(X)\cong K_{1,Nis}^G(\Aff^1_X)$
for any $X\in\Sm_k$.

\end{lem}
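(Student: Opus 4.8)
The plan is to derive both parts from the injectivity of $K_1^G$ over local rings together with homotopy invariance of the presheaf $K_1^G$. For part $(i)$, I would use that $K_{1,Nis}^G$ is a Nisnevich sheaf of groups, so a section of it over $X$ is trivial as soon as all of its stalks are trivial. Since $X$ is smooth, its connected and irreducible components coincide and are pairwise disjoint, so every point $x$ lies on a unique component $X_i$, and $k(X_i)\subseteq\mathrm{Frac}(\mathcal{O}_{X,x}^{h})$, where the henselian regular local ring $\mathcal{O}_{X,x}^{h}$ is the Nisnevich stalk; by the continuity statement recorded before Lemma~\ref{lem:K1group}, this stalk carries the value $K_1^G(\mathcal{O}_{X,x}^{h})$. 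Writing $\mathcal{O}_{X,x}^{h}$ as a filtered colimit of local rings of \'etale neighbourhoods of $x$ (each a local ring of a smooth $k$-variety, to which \cite[Theorem 1.4]{St-poly} applies when $k$ is infinite, and Lemma~\ref{lem:fin-inj} when $k$ is finite) and passing to the colimit, I obtain that $K_1^G(\mathcal{O}_{X,x}^{h})\to K_1^G(\mathrm{Frac}(\mathcal{O}_{X,x}^{h}))$ is injective. Now if $a\in K_{1,Nis}^G(X)$ becomes trivial in every $K_{1,Nis}^G(k(X_i))=K_1^G(k(X_i))$, then for each $x$ the image of the stalk $a_x$ in $K_1^G(\mathrm{Frac}(\mathcal{O}_{X,x}^{h}))$ equals the image of $a|_{k(X_i)}$ under $k(X_i)\hookrightarrow\mathrm{Frac}(\mathcal{O}_{X,x}^{h})$, hence is trivial; by the injectivity just quoted, $a_x$ is trivial for all $x$, and therefore $a$ is trivial.

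For part $(ii)$ I would first secure the two presheaf-level inputs over an \emph{arbitrary} field $k$. Even when $k$ is imperfect, every smooth $k$-algebra $R$ is geometrically regular over the perfect prime field $k_0\subseteq k$, and $G_R=G\times_k R$ is a reductive $R$-group scheme satisfying the $(\Gm)^2$-condition; applying the reductive-group-scheme form of \cite[Theorem 1.3]{St-poly} over $k_0$ to $G_R$ gives homotopy invariance $K_1^G(R)\cong K_1^G(R[t])$ for every smooth $k$-algebra $R$ and every field $k$. When $G$ is simply connected semisimple I would likewise invoke the Laurent-polynomial analogue $K_1^G(R)\cong K_1^G(R[t,t^{-1}])$ for such $R$; this is the point at which simple connectedness is genuinely used, as for non-simply-connected $G$ a nonvanishing boundary term obstructs it.

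With these in hand, both invariance statements for the sheaf follow by the same scheme. The relevant structure maps split off the desired summand: the zero section $z\colon X\to\A1_X$ splits $p_{\A1}^{*}\colon K_{1,Nis}^G(X)\to K_{1,Nis}^G(\A1_X)$, and the unit section $s_1\colon X\to\Gm_X$, $t\mapsto 1$, splits $p_{\Gm}^{*}\colon K_{1,Nis}^G(X)\to K_{1,Nis}^G(\Gm_X)$, so in each case $p^{*}$ is a split monomorphism. For surjectivity I would check the identities $p_{\A1}^{*}z^{*}=\id$ and $a=p_{\Gm}^{*}s_1^{*}a$ after restriction to the generic points $\eta_i$ of the components, which is legitimate by part $(i)$; note that $\A1_{X_i}$ and $\Gm_{X_i}$ share the function field $k(X_i)(t)$, so both sheaves inject compatibly into $\prod_i K_1^G(k(X_i)(t))$. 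The required identity in $K_1^G(k(X_i)(t))$ I would then read off from the stalk of the section along the chosen rational section ($t=0$ for $\A1$, $t=1$ for $\Gm$): that stalk is a henselian local ring with coefficient field $k(X_i)$, on which homotopy invariance (respectively its Laurent analogue) forces the section to coincide with the constant extension of its value along the section.

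The hard part will be exactly this last step in the $\Gm$-case: the vanishing of the boundary that allows a section over $\Gm_X$ to be recognised as constant at the generic points is equivalent to the Laurent-polynomial invariance of $K_1^G$, and it is here, and only here, that the hypothesis that $G$ is simply connected semisimple is indispensable. A secondary, purely technical, obstacle is that the homotopy-invariance results of \cite{St-poly} are phrased over perfect ground fields; the reduction to the perfect prime field $k_0$ in the second paragraph is designed to remove the characteristic restriction, so that parts $(i)$ and $(ii)$ hold uniformly for all fields $k$, finite or infinite, perfect or not.
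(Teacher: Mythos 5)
Your part $(i)$ is essentially the paper's own argument: the paper packages $\prod_i K_{1,Nis}^G(k(X_i))$ as a Nisnevich sheaf and checks injectivity on stalks, using exactly your two inputs (Lemma~\ref{lem:fin-inj} for finite $k$, \cite[Theorem 1.4]{St-poly} plus a colimit argument for infinite $k$); that part is correct. The problem is part $(ii)$, where you depart from the paper in a way that does not work. The paper never uses presheaf-level homotopy invariance here. It reduces to the field case and there invokes Gille's theorems \cite[Th\'eor\`eme 5.8 et Th\'eor\`eme 7.2]{Gil}: for simply connected isotropic $G$ over a field $K$, the map $K_1^G(K)\to K_1^G(K(t))$ is an \emph{isomorphism}. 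Surjectivity of this map is the decisive input: combined with $(i)$ it shows that the injection $K_{1,Nis}^G(K[t])\hookrightarrow K_1^G(K(t))$ is onto, hence every sheaf section over $\Aff^1_K$ (and over $\Gm_{,K}$) is constant. Note in particular that simple connectedness is needed for \emph{both} the $\A1$- and the $\Gm$-statement in the paper's proof, contrary to your claim that the $\A1$-case goes through for all reductive $G$.

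Your substitute for this input has two concrete gaps. First, a section of the sheafification $K_{1,Nis}^G$ over $\Aff^1_K$ need not come from the presheaf $K_1^G(K[t])$, so the presheaf identity $K_1^G(K[t])=K_1^G(K)$ controls nothing by itself; you therefore pass to the stalk at the generic point $\eta_0$ of the zero section and assert that ``homotopy invariance'' forces this stalk, an element of $K_1^G(\mathcal{O}^h)$ for the henselian DVR $\mathcal{O}^h=\mathcal{O}^h_{\Aff^1_K,\eta_0}$, to be the constant extension of its residue. That assertion is not a consequence of polynomial (or Laurent) homotopy invariance: it is rigidity for henselian DVRs, i.e.\ Gille's specialization theorem \cite{Gil-spec}, \cite[Corollaire 7.3]{Gil} -- a separate deep result, and precisely the kind of statement that Section 4 of the paper needs the whole transfer machinery to prove. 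Second, even granting that rigidity, equal stalks at $\eta_0$ only give that $a$ and $p^*z^*a$ agree after the field extension $K(t)\subseteq \mathrm{Frac}(\mathcal{O}^h)$, which is an infinite separable algebraic extension; to conclude equality in $K_1^G(K(t))$ -- which is what $(i)$ demands -- you need injectivity of $K_1^G(K(t))\to K_1^G\bigl(\mathrm{Frac}(\mathcal{O}^h)\bigr)$. Part $(i)$ does not provide this (it concerns restriction from a scheme to its own function field, not functoriality along field extensions), and such injectivity fails for Whitehead groups in general, which typically die after passing to a splitting field. Finally, the ``Laurent analogue'' $K_1^G(R)\cong K_1^G(R[t,t^{-1}])$ you invoke is not in the cited literature (\cite[Corollary 3.4]{St-serr} gives only injectivity into $K_1^G(A((t)))$), and over fields its proof would itself require Gille's theorem, so that input is circular. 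The repair is simply to argue as the paper does: quote Gille's isomorphism $K_1^G(K)\cong K_1^G(K(t))$ and deduce both invariance statements from it together with part $(i)$.
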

\begin{proof}
$(i)$  Note that the claim folds for $X=\Spec R$, where $R$ is a local Henselian essentially smooth $k$-algebra.
Indeed, if $k$ is finite, it holds by Lemma~\ref{lem:fin-inj}. If $k$ is infinite, then by~\cite[Theorem 1.4]{St-poly}
the map $K_1^G(A)\to K_1^G(F)$ is injective for any ring $A$ with the fraction field $F$, whenever
$A$ is a local ring of a smooth $k$-variety.
Any local essentially smooth $k$-algebra $R$ is a direct limit of such local rings, and $K_1^G$ commutes with
direct limits, hence $K_1^G(R)\to K_1^G(K)$ is injective.

Consider the presheaf $F$ on $\Sm_k$ given by
$$
F(X)=\prod_{i=1}^k K_{1,Nis}^G(k(X_i)).
$$
The presheaf $F$ is a Nisnevich sheaf, since, clearly, it takes elementary Nisnevich squares to cartesian
squares. Then the natural morphism of functors $K_{1,Nis}^G\to F$ is injective, since it is injective on stalks.

$(ii)$ We first prove that $K_{1,Nis}^G(R)\cong K_{1,Nis}^G(R[t])$ for any
essentially smooth $k$-domain $R$.
First, assume that $R=K$ is a field extension
of $k$. By $(i)$, the homomorphism
$$
f:K_{1,Nis}^G(K[t])\to K_{1,Nis}^G(K(t))
$$
is injective. On the other hand, $f$ is surjective, since the natural homomorphism
$$
g:K_{1,Nis}^G(K)=K_1^G(K)\to K_1^G(K(t))=K_{1,Nis}^G(K(t))
$$
is an isomorphism by~\cite[Th\`eor\'eme 5.8 and Th\`eor\'eme 7.2]{Gil}. This implies that $f$ is an isomorphism.
Therefore, the map
$$
f^{-1}\circ g : K_{1,Nis}^G(K)\to K_{1,Nis}^G(K[t])
$$
is an isomorphism.

Now assume that $R$ is an essentially smooth $k$-domain, and let $K$ be its field of fractions. By $(i)$, the natural map
$$
K_{1,Nis}^G(R[t])\to K_{1,Nis}^G(K(t))
$$
is injective. Then the map $K_{1,Nis}^G(R[t])\to K_{1,Nis}^G(K[t])$ is also injective. Then the commutative
diagram
\begin{equation*}
\xymatrix@R=15pt@C=30pt{
K_{1,Nis}^G(R[t])\ar[r]^{t\mapsto 0}\ar[d]&K_{1,Nis}^G(R)\ar[d]\\
K_{1,Nis}^G(K[t])\ar[r]^{t\mapsto 0}& K_{1,Nis}^G(K)\\
}
\end{equation*}
implies that the map $K_{1,Nis}^G(R[t])\xrightarrow{t\mapsto 0}K_{1,Nis}^G(R)$ is injective, and hence an isomorphism.

To finish the proof of $(ii)$, consider an open cover $X=\bigcup_i U_i$, where $U_i$ are smooth connected affine $k$-schemes.
Then $\Aff^1_X=\bigcup_i \Aff^1_{U_i}$ is also an open cover. Assume that
$$
x\in\ker(K_{1,Nis}^G(\Aff^1_X)\xrightarrow{f_0} K_{1,Nis}^G(X)),
$$
where $f_0$ is the "restriction to 0"{} homomorphism. Then $x|_{U_i}=1$ for any $U_i$, since by the above
$$
K_{1,Nis}^G(\Aff^1_{U_i})\cong K_{1,Nis}^G(U_i).
$$
Since $K_{1,Nis}^G$ is a Zariski sheaf, this implies that $x=1$.

The proof of $\Gm$-invariance is the same as the proof of $\Aff^1$-invariance, except that we show that the "restriction to $1$"{}
homomorphism $K_{1,Nis}^G(\Gm_{,X})\to K_{1,Nis}^G(X)$ is injective.
\end{proof}

We will later need one more technical lemma.

\begin{lem}\label{lem:K1G'}
Let $k$ be a field, let $R$ and $S$ be two essentially smooth $k$-algebras, and let $f:R\to S$
be a finite flat $k$-algebra homomorphism.
Let $G$ be a reductive group over $k$,  having a strictly proper parabolic $k$-subgroup.
Set $G'=R_{S/R}(G_S)$. Then $G'$ is a reductive group scheme over $R$ having a strictly proper parabolic
$R$-subgroup, and for any essentially smooth $R$-algebra $A$ one has
$$
K_{1,Nis}^{G'}(A)=K_{1,Nis}^G(A\otimes_R S),
$$
where $K_{1,Nis}^{G'}$ and $K_{1,Nis}^G$ are the Nisnevich sheafifications of the corresponding non-stable
$K_1$-functors on $\Sm_R$ and $\Sm_k$ respectively.
\end{lem}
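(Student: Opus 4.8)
The plan is to transport the whole construction of $K_1^{G,P}$ through the Weil restriction functor by means of its defining adjunction. Since $f\colon R\to S$ is finite flat and $S$ is essentially smooth over $k$, the Weil restriction $G'=R_{S/R}(G_S)$ exists as an affine $R$-group scheme, and for every $R$-algebra $A$ one has the functorial identification
$$
G'(A)=G_S(A\otimes_R S)=G(A\otimes_R S),
$$
where on the right $A\otimes_R S$ is regarded as a $k$-algebra via $f$ and the structure map of $S$. First I would invoke the standard behaviour of Weil restriction along finite \'etale morphisms (\cite{SGA3}, Exp.~XXVI together with the Weil-restriction formalism) to see that $G'$ is reductive over $R$ and that, for a strictly proper parabolic $k$-subgroup $P$ of $G$, the subgroup $P'=R_{S/R}(P_S)$ is a parabolic $R$-subgroup of $G'$ with unipotent radical $U_{P'}=R_{S/R}(U_{P_S})$ and opposite $(P')^-=R_{S/R}((P^-)_S)$. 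Strict properness of $P'$ I would verify fibrewise: over a geometric point of $\Spec R$ the cover $S/R$ splits $G'$ into a product of copies of $G$ and carries $P'$ to the corresponding product of copies of $P$, each factor meeting the associated simple factor properly.

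With $P'$ in hand, the elementary subgroup transforms correctly. Using $U_{P'}(A)=U_{P_S}(A\otimes_R S)=U_P(A\otimes_R S)$ and the analogous identity for $(P')^-$, the adjunction gives
$$
E_{P'}(A)=\bigl\langle U_{P'}(A),U_{(P')^-}(A)\bigr\rangle=\bigl\langle U_P(A\otimes_R S),U_{P^-}(A\otimes_R S)\bigr\rangle=E_P(A\otimes_R S)
$$
inside $G'(A)=G(A\otimes_R S)$. Passing to quotients, and identifying the functor $K_1^{G_S}$ on $S$-algebras with $K_1^{G}$ on the same rings viewed over $k$, I obtain an identification of presheaves on essentially smooth $R$-algebras $K_1^{G',P'}(A)=K_1^{G,P}(A\otimes_R S)$.

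It then remains to descend this to Nisnevich sheaves. I would first check that $A\mapsto K_{1,Nis}^G(A\otimes_R S)$ is itself a Nisnevich sheaf on $\Sm_R$: the base change $X\mapsto X\times_{\Spec R}\Spec S$ carries an elementary Nisnevich square to an elementary Nisnevich square, so this functor sends such squares to Cartesian squares. The presheaf identity then yields a natural comparison map $K_{1,Nis}^{G'}\to\bigl(A\mapsto K_{1,Nis}^G(A\otimes_R S)\bigr)$, which I would check to be an isomorphism on Nisnevich stalks. At a Henselian local essentially smooth $R$-algebra $\mathcal O$ the source stalk is $K_1^{G'}(\mathcal O)=K_1^G(\mathcal O\otimes_R S)$; since $\mathcal O\otimes_R S$ is a finite $\mathcal O$-algebra it decomposes as a finite product $\prod_j \mathcal O_j$ of Henselian local rings, each essentially smooth over $k$, so the target stalk is $\prod_j K_{1,Nis}^G(\mathcal O_j)=\prod_j K_1^G(\mathcal O_j)=K_1^G(\mathcal O\otimes_R S)$, matching the source.

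The step I expect to be the main obstacle is precisely this last stalk computation, and in particular keeping every ring in sight regular. For the factors $\mathcal O_j$ to be essentially smooth over $k$ — so that the known identification of the stalk of $K_{1,Nis}^G$ with the value of $K_1^G$ applies, e.g. via Lemma~\ref{lem:inj} and the local results of \cite{St-poly} — one needs $\mathcal O\otimes_R S$ to remain regular, which is where the structure of $f$ along its \'etale locus must be used rather than flatness alone; the reductivity of $G'$ and the strict properness of $P'$ in the first step rest on the same local analysis of $S$ over $R$. Once that regularity is secured, the two sheaves agree on all stalks and hence coincide, giving $K_{1,Nis}^{G'}(A)=K_{1,Nis}^G(A\otimes_R S)$.
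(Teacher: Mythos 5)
Your proposal follows the same route as the paper's proof: the Weil-restriction adjunction gives the presheaf identity $K_1^{G'}(A)=K_1^G(A\otimes_R S)$, one checks that $A\mapsto K_{1,Nis}^G(A\otimes_R S)$ is a Nisnevich sheaf on $\Sm_R$, and one compares the resulting morphism of sheaves on Nisnevich stalks, using that the finite algebra $\mathcal O\otimes_R S$ over a Henselian local ring $\mathcal O$ splits into a finite product of Henselian local rings. However, the step you single out as ``the main obstacle'' --- securing regularity of $\mathcal O\otimes_R S$, which you claim requires analyzing ``the structure of $f$ along its \'etale locus'' --- is in fact a triviality, and your proof is left hanging on it. Since $\mathcal O$ is essentially smooth over $R$, the base change $\mathcal O\otimes_R S$ is essentially smooth over $S$; since $S$ is essentially smooth over $k$, transitivity makes $\mathcal O\otimes_R S$ essentially smooth over $k$, hence regular, and each Henselian local factor $\mathcal O_j$ is a localization of it, hence also essentially smooth over $k$. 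Neither flatness nor \'etaleness of $f$ enters; this is exactly the one-sentence observation the paper makes, and not making it leaves your proposal formally incomplete at a point where nothing is missing.

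The second weak point is your fibrewise verification of reductivity of $G'$ and strict properness of $P'$: you assert that over a geometric point of $\Spec R$ the cover $S/R$ splits $G'$ into a product of copies of $G$. That is true only at points where $f$ is \'etale. Under the stated hypothesis $f$ is merely finite flat, so at a ramified point the geometric fiber of $S/R$ is a non-reduced Artinian algebra, and the corresponding fiber of $G'$ is an extension of (a product of copies of) $G$ by a unipotent group, not a product of copies of $G$; so the split-fiber computation you propose does not exist there. The paper does not argue this step fibrewise: it quotes smoothness of Weil restriction from \cite[\S 7.6, Proposition 5]{Neron-book} and the description of semisimple normal subgroups from \cite[Exp. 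XXIV, \S 5]{SGA3}. So, as written, your argument for the first assertion of the lemma does not go through under the stated hypotheses, while the remainder of your plan (adjunction, sheaf property of $A\mapsto K_{1,Nis}^G(A\otimes_R S)$, stalk comparison) is sound and coincides with the paper's.
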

\begin{proof}
The group scheme $G'$ over $R$ is, clearly, affine; it is smooth by~\cite[\S 7.6, Proposition 5]{Neron-book}.
Its geometric fibers are reductive groups, hence it is a reductive group scheme. If $P$ is a strictly
proper parabolic subgroup of $G$, then $R_{S/R}(P_S)$ is a strictly
proper parabolic subgroup of $G'$ (this follows from the description of all semisimple normal
subgroups of $G$ in~\cite[Exp. XXIV, \S 5]{SGA3}). Clearly, for any essentially smooth $R$-algebra $A$
the algebra $A\otimes_R S$ is essentially smooth over $S$, and hence over $k$. By definition, we have
$K_1^{G'}(A)=K_1^G(A\otimes_R S)$. Hence there is a morphism of presheaves on the category
of smooth $R$-algebras $K_1^{G'}(-)\to K_{1,Nis}^G(-\otimes_R S)$. The restriction of $K_{1,Nis}^G(-\otimes_R S)$
to $\Sm_R$ is a Nisnevich sheaf e.g. by~\cite[\href{http://stacks.math.columbia.edu/tag/00XI}{Tag 00XI}]{Stacks}, hence this morphism factors through a morphism
$$
\phi:K_{1,Nis}^{G'}(-)\to K_{1,Nis}^G(-\otimes_R S).
$$
Assume that $A$ is a Henselian local ring. Since $A\otimes_R S$ is finite over $A$, we conclude that
$A\otimes_R S$ is a finite product of Henselian local rings. Then $K_{1,Nis}^G(A\otimes_R S)=K_1^G(A\otimes_R S)$.
Therefore, $\phi$ is an isomorphism on Nisnevich stalks, hence an isomorphism.
\end{proof}





\section{Proof of Theorem~\ref{thm:transf} and Corollary~\ref{cor:bir}.}

Our construction of transfers for non-stable $K_1$-functors is based on the construction of norm maps
for $R$-equivalence class groups due to  V. Chernousov and A. Merkurjev~\cite{ChM-su}.
We briefly recall their result.

\begin{rem}\label{rem:ChM}
Note that the groups that are routinly denoted by $G,H,G'$, and $H'$ in~\cite{ChM-su} we denote by $H',G',H$, and $G$
respectively, in order to secure compatibility with the statement of Theorem~\ref{thm:transf}.
\end{rem}

We will need the following notions.

\begin{defn}
Let $X$ be an algebraic variety over a field $k$.
Denote by $k[t]_{(t),(t-1)}$ the semilocal ring
of the affine line $\Aff^1_k$ over $k$ at the points $0$ and $1$. Two points $x_0,x_1\in X(k)$ are
called \emph{directly $R$-equivalent}, if there is $x(t)\in X\bigl(k[x]_{(x,x-1)}\bigr)$ such that $x(0)=x_0$ and $x(1)=x_1$.
The \emph{$R$-equivalence relation} on $X(k)$ is the equivalence relation generated by direct $R$-equivalence.
The \emph{$R$-equivalence class group $G(k)/RG(k)$} of an algebraic $k$-group $G$ is the quotient of $G(k)$
by the $R$-equivalence class $RG(k)$ of the neutral element $1_G\in G(k)$.
\end{defn}

It is easy to see that the $RG(k)$ is a normal subgroup
of $G(k)$, so $G(k)/RG(k)$ is indeed a group. If $G$ has a proper parabolic subgroup $P$
over $k$, then all elements of $E_P(k)$ are $R$-equivalent to $1$, so $K_1^{G,P}(k)$ surjects
onto $G(k)/RG(k)$.

\begin{defn}
Let $k$ be a field, and let $H\xrightarrow{\mu} T$ be a homomorphism of algebraic $k$-groups where $T$
is a commutative $k$-group. Let $F$ be a field extension of $k$. We say that \emph{$\mu$ satisfies Merkurjev's norm principle over $F$}, if for any \'etale
$F$-algebra $E$ the standard norm homomorphism $T(E)\xrightarrow{N_{E/F}} T(F)$ satisfies
$$
N_{E/F}\circ\mu\bigl(H(E)\bigr)\subseteq \mu\bigl(H(F)\bigr).
$$
\end{defn}

Let $H\xrightarrow{\mu} T$ be a homomorphism of algebraic $k$-groups where $T$
is a $k$-torus, and $H$ is a reductive $k$-group. Set $G=\ker\mu$. Assume that $F$ is an infinite field
extension of $k$ such that $\mu$ satisfies Merkurjev's norm principle over $F$ and $H(F)/RH(F)=1$.
Let $E/F$ be a finite separable field extension.
Chernousov and Merkurjev proved~\cite[\S 4]{ChM-su} that under these assumptions $N_{E/F}$
extends to ``a norm homomorphism''
$$
N_{E/F}: H(E)/RG(E)\to H(F)/RG(F),
$$
and the latter homomorphism restricts to a correctly defined homomorphism
\begin{equation}\label{eq:Gnorm}
N_{E/F}: G(E)/RG(E)\to G(F)/RG(F).
\end{equation}

Assume in addition that $G$ is a reductive $F$-group. Then $\mu$ is known to satisfy Merkurjev's norm
principle over $F$ whenever $F$ is perfect~\cite[Theorem 3.9]{M-norm}, or $H$ splits over a finite field extension of $F$ of
degree coprime to $\Char F$~\cite[Remark 4.1]{ChM-su}, as well as in many other cases, see e.g.~\cite{MeBa}.

If, moreover, $G$ semisimple and simply connected, and contains a strictly proper parabolic $k$-subgroup $P$,
then for any field extension $F$ of $k$ one has
$$
K_1^{G,P}(F)=K_1^G(F)\cong G(F)/RG(F)
$$
by~\cite[Th\'eor\`eme 7.2]{Gil}.
Thus, the map~\eqref{eq:Gnorm} becomes a group homomorphism
$$
N_{E/F}:K_1^G(E)\to K_1^G(F).
$$

We will deduce Theorem~\ref{thm:transf} from the following more general result.

\begin{thm}\label{thm:transf-gen}
Let $k$ be an infinite field. Let $G$ be a simply connected semisimple group over $k$ such
that every semisimple normal subgroup of $G$ contains $(\Gm_{,k})^2$. Assume that there exists a $k$-rational
reductive $k$-group $H$ and a $k$-torus $T$ such that $G$ fits into a short exact sequence of $k$-group
homorphisms
\begin{equation}\label{eq:seq}
1\to G\to H\xrightarrow{\mu} T\to 1.
\end{equation}
Assume also that $\mu$ satisfies Merkurjev's norm principle over every field extension $F$ of $k$.
Then the functor $K_{1,Nis}^G$ on $\Sm_k$ takes values in $\Ab$, and has transfer homomorphisms
$$
f_*:K_{1,Nis}^G(S)\to K_{1,Nis}^G(R),
$$
defined for any pair $R$, $S$ of essentially smooth $k$-algebras
and any finite flat generically \'etale $k$-algebra homomorphism $f:R\to S$, such that the following properties are satisfied.
\begin{enumerate}
\item Assume that $S=S_1\times S_2$
is a product of two regular $k$-algebras, and let $f_1:R\to S_1$ and $f_2:R\to S_2$
be the natural maps. Then $f_*=({f_1}_*,{f_2}_*)$. If, moreover,
$f_1:R\xrightarrow{\cong} S_1$ is a $k$-algebra isomorphism, then ${f_1}_*={((f_1)^{-1})}^*$.

\item For any $k$-algebra homomorphism $g:R\to R'$
the following diagram commutes:
\begin{equation*}
\xymatrix@R=15pt@C=40pt{
K_{1,Nis}^G(R)\ar[r]^{g^*}&K_{1,Nis}^G(R')\\
K_{1,Nis}^G(S)\ar[u]_{f_*}\ar[r]^{g'^*}&K_{1,Nis}^G(S\otimes_R R')\ar[u]_{(f')_*}\\
}
\end{equation*}
Here $f'$ and $g'$ are the natural homomorphisms obtained by base change.

\item If $f:E\to F$ is a finite separable extension of fields essentially smooth over $k$, then
$$
f_*=N_{E/F}:G(E)/RG(E)\to G(F)/RG(F)
$$
is the Chernousov--Merkurjev norm homomorphism~\cite[p. 187]{ChM-su}.
\end{enumerate}

\end{thm}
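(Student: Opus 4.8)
The plan is to build the transfers not directly on $R$ and $S$, but as a \emph{norm morphism of Nisnevich sheaves over the base $R$}, and then to read off $f_*$ from the identification furnished by Lemma~\ref{lem:K1G'}. Writing $G'=R_{S/R}(G_S)$ and $G_R=G\times_k R$, so that $K_{1,Nis}^{G_R}=K_{1,Nis}^G$ on $\Sm_R$, Lemma~\ref{lem:K1G'} gives $K_{1,Nis}^{G'}(R)=K_{1,Nis}^G(S)$; hence it suffices to produce a morphism of Nisnevich sheaves on $\Sm_R$
\[
\mathcal N_{S/R}\colon K_{1,Nis}^{G'}\longrightarrow K_{1,Nis}^{G_R}
\]
and to set $f_*=\mathcal N_{S/R}(R)$. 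Before constructing it I would dispose of the $\Ab$-valued claim: for every field extension $F/k$ one has $K_1^G(F)\cong G(F)/RG(F)$ by~\cite[Th\'eor\`eme 7.2]{Gil}, and this group is abelian for simply connected isotropic $G$ (the elementary subgroup $E_P(F)$ contains $[G(F),G(F)]$); by Lemma~\ref{lem:inj}(i) the sheaf $K_{1,Nis}^G$ embeds into a product of such groups, so it is $\Ab$-valued.

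To construct $\mathcal N_{S/R}$ I would define it on Nisnevich stalks and then invoke the sheaf property, exactly as in the proof of Lemma~\ref{lem:K1G'}. A stalk is a Henselian local essentially smooth $R$-algebra $A$; there $K_{1,Nis}^{G'}(A)=K_1^G(A\otimes_R S)$ and $K_{1,Nis}^{G_R}(A)=K_1^G(A)$. Since $A\otimes_R S$ is a finite product $\prod_j B_j$ of Henselian local rings, each finite flat and generically \'etale over $A$, additivity reduces the problem to constructing, functorially in $A$, a norm homomorphism
\[
N_{B/A}\colon K_1^G(B)\to K_1^G(A)
\]
for every finite flat generically \'etale extension $A\to B$ of Henselian local essentially smooth $k$-algebras. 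Being natural, these stalkwise maps assemble into $\mathcal N_{S/R}$, and $f_*=\mathcal N_{S/R}(R)$ is then automatically a group homomorphism.

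The heart of the matter -- and the step I expect to be the main obstacle -- is to carry the Chernousov--Merkurjev construction of~\cite[\S 4]{ChM-su} over from field extensions to the Henselian local extension $A\to B$. Here I would use three ingredients. First, Panin's observation that the corestriction of the torus $T$ is defined for arbitrary finite flat morphisms, yielding $N_{B/A}\colon T(B)\to T(A)$ with the expected functoriality and additivity. Second, a relative form of Merkurjev's norm principle $N_{B/A}\bigl(\mu(H(B))\bigr)\subseteq\mu(H(A))$: as $A$ is Henselian and $H$ is smooth, this follows by reduction to the residue field $A/\mathfrak m$, where the norm principle is exactly the hypothesis on $\mu$ (this is where the assumption over \emph{every} extension of $k$ enters, and why the $D_l$ case needs $\Char k\neq2$), together with the fact that $\ker\bigl(T(A)\to T(A/\mathfrak m)\bigr)$ lies in $\mu(H(A))$ because it maps trivially to $H^1(A,G)$. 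Third, the $R$-triviality of the $k$-rational group $H$. With these in hand the construction of~\cite{ChM-su} goes through, the real work being to check that the ambiguity in lifting $N_{B/A}(\mu(\,\cdot\,))$ through $\mu$ is absorbed into $R$-equivalence in this relative setting; via Lemma~\ref{lem:inj}(i) and~\eqref{eq:Gnorm} one verifies that on fraction fields $N_{B/A}$ recovers the Chernousov--Merkurjev norm, which pins it down uniquely.

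Finally I would verify properties (1)--(3). By Lemma~\ref{lem:inj}(i) every value $K_{1,Nis}^G(A)$ injects into the product of the groups $K_1^G$ at its generic points, and the transfers are compatible with these injections by construction, so each of the three identities may be checked after passing to fraction fields. There property (3) holds by the previous paragraph; property (1) reduces to the additivity of the torus norms $N_{B/A}$ over products, together with the fact that the norm along a $k$-isomorphism is the inverse pullback; and property (2) reduces to the compatibility of $N_{B/A}$, hence of the Chernousov--Merkurjev norm, with base change. All of these are established in~\cite[\S 4]{ChM-su}.
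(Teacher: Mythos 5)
Your skeleton---Weil restriction $G'=R_{S/R}(G_S)$, the identification of Lemma~\ref{lem:K1G'}, Panin's torus norms, a Chernousov--Merkurjev-type norm over Henselian local rings, and verification of (1)--(3) after injecting into fraction fields---is the same as the paper's. But the central step of your construction is a non sequitur: a family of homomorphisms $N_{B/A}\colon K_1^G(B)\to K_1^G(A)$ defined \emph{only on Nisnevich stalks} (Henselian local essentially smooth rings), natural for homomorphisms between such rings, does not ``assemble into'' a morphism of Nisnevich sheaves. The analogy with Lemma~\ref{lem:K1G'} fails: there the morphism is first defined at the presheaf level on \emph{all} smooth $R$-algebras (since $K_1^{G'}(A)=K_1^G(A\otimes_R S)$ holds everywhere by definition), it then factors through the sheafification, and stalks are used only to check that the resulting morphism is an isomorphism. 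In your situation there is no presheaf-level map to sheafify, and ``invoking the sheaf property'' gives nothing: for a general essentially smooth $A$ you must show that the stalkwise images, i.e.\ the resulting element of $\prod_{p}K_{1,Nis}^G(A_p^h)$, actually arises from an element of $K_{1,Nis}^G(A)$. Your data is exactly the input of the paper's Lemma~\ref{lem:Aring}, and what you treat as automatic is precisely that lemma's content, which is the technical heart of the proof: one sets $\lambda_A=\lambda_K\circ\theta$ ($K$ the fraction field) and proves $\lambda_K\circ\theta\bigl(F_1(A)\bigr)\subseteq F_2(A)$ by induction on $\dim A$, using the injectivity $F_2(R)\hookrightarrow F_2(\operatorname{Frac}R)$ and a Nisnevich cover of $\Spec A$ by an \'etale neighbourhood $A'$ (through which the henselization factors) together with principal opens covering $\Spec A\setminus\{p\}$. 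You have the injectivity available---you use it to verify (1)--(3)---but never deploy it where it is indispensable, in the construction of $f_*$ itself.

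A second, more local, problem: your ``relative form of Merkurjev's norm principle'' over a Henselian local $A$ is both unnecessary and not established by your argument. Reduction to the residue field fails as stated, because $f$ is only \emph{generically} \'etale: $B\otimes_A A/\mathfrak{m}$ need not be an \'etale algebra over $A/\mathfrak{m}$ (it can be non-reduced), so the hypothesis---Merkurjev's norm principle for \'etale algebras over field extensions of $k$---does not apply to the closed fibre. The paper sidesteps the closed fibre entirely: Lemma~\ref{lem:Ueta} requires the condition~\eqref{eq:mumu} only for field extensions $F$ of the \emph{fraction} field $K$, where $F\otimes_A B$ is a product of finite separable field extensions precisely because $f$ is generically \'etale; the rational map $\eta$ and the density of $U(R)$ (Lemma~\ref{lem:rat-surj}, which is where the infinite residue field enters) then carry the construction, via Lemma~\ref{lem:K1transf}. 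Finally, your parenthetical justification of abelianness---that $E_P(F)\supseteq[G(F),G(F)]$ for every isotropic simply connected $G$---is not a known general fact; the paper instead deduces that $K_1^G(F)$ is abelian from the exact sequence~\eqref{eq:seq} via~\cite[Lemma 1.2]{ChM-su}, which is what the hypothesis on $H$ and $T$ is there for.
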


We will need the following technical lemma.
\begin{lem}\label{lem:rat-surj}
Let $A$ be a semilocal ring such that all residue fields of $A$ are infinite. Let $X$ be an affine group scheme
over $A$. Assume that $X$ is rational,
i.e. contains a dense open $A$-subscheme $V$ which is isomorphic to an open subscheme of $\Aff_A^n$ for some
$n\ge 1$. Then

$(i)$ $X(A)$ is dense in $X$, i.e. for any open $A$-subscheme $U\subseteq X$
one has $U(A)\neq\emptyset$.

$(ii)$ $X(A)=V(A)V(A)$.

$(iii)$ if $A$ is local, then for any ideal $I$ of $A$
the natural homomorphism $X(A)\to X(A/I)$ is surjective.
\end{lem}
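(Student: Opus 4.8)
The plan is to reduce all three assertions to a single point-existence statement about affine space over $A$, and then to bootstrap through the group law. The crucial tool, which is where the infinitude of the residue fields enters, is the following observation. Let $W\subseteq\Aff^n_A$ be an open subscheme whose fibre $W_{\mathfrak m}=W\times_{\Spec A}\Spec(A/\mathfrak m)$ is nonempty for every maximal ideal $\mathfrak m$ of $A$; then $W(A)\neq\emptyset$. I would prove this by writing the reduced complement $Z=\Aff^n_A\setminus W$, noting that for each of the finitely many maximal ideals $\mathfrak m$ the fibre $Z_{\mathfrak m}$ is a proper closed subset of $\Aff^n_{A/\mathfrak m}$, so some polynomial over $A/\mathfrak m$ vanishes on $Z_{\mathfrak m}$ without being identically zero; since $A/\mathfrak m$ is infinite, there is a point $\bar a_{\mathfrak m}\in W_{\mathfrak m}$ avoiding it. By the Chinese remainder theorem I lift these to a single $a\in A^n$, and observe that the morphism $a\colon\Spec A\to\Aff^n_A$ lands in $W$ because the closed set $a^{-1}(Z)$, were it nonempty, would contain some maximal ideal, contradicting $\bar a_{\mathfrak m}\in W_{\mathfrak m}$. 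Throughout I will use that $V$ meets every fibre $X_{\mathfrak m}$ and that the fibres are irreducible, so any two nonempty opens of $X_{\mathfrak m}$ meet; in the setting where the lemma is applied $X$ is a smooth $A$-group scheme with connected fibres and $V$ is faithfully flat over $A$, which guarantees this fibrewise density.

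Next I would prove $(ii)$ and deduce $(i)$ from it via translations. The inclusion $V(A)V(A)\subseteq X(A)$ is clear, so fix $g\in X(A)$ and consider $V$ together with $gV^{-1}$, where $V^{-1}$ is the image of $V$ under inversion and $gV^{-1}$ its left translate by $g$. Both are images of $V$ under automorphisms of $X$, hence rational opens meeting every (irreducible) fibre, so $W=V\cap gV^{-1}$ is open with nonempty fibres. Transporting $W$ into $\Aff^n_A$ through the given isomorphism $V\cong(\text{open of }\Aff^n_A)$ and applying the tool yields $v_1\in W(A)\subseteq V(A)$; by construction $v_2:=v_1^{-1}g\in V(A)$ and $v_1v_2=g$, which is $(ii)$. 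For $(i)$, given a nonempty open $U\subseteq X$, the multiplication $m\colon X\times_A X\to X$ restricted to $V\times_A V$ is fibrewise dominant (already $\{v_0\}\times V\mapsto v_0V$ is dense), so $m^{-1}(U)\cap(V\times_A V)$ is an open of $V\times_A V\cong(\text{open of }\Aff^{2n}_A)$ with nonempty fibres; the tool produces a point $(v_1,v_2)$ there, whence $v_1v_2\in U(A)$.

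For $(iii)$ I would assume $A$ local with maximal ideal $\mathfrak m$, take an ideal $I$, and note that $A/I$ is again local with the same infinite residue field and that $V_{A/I}$ still meets its closed fibre, so $(ii)$ applies over $A/I$. Given $\bar g\in X(A/I)$, write $\bar g=\bar v_1\bar v_2$ with $\bar v_i\in V(A/I)$; under $V\cong(\text{open }W\subseteq\Aff^n_A)$ each $\bar v_i$ is a point of $W(A/I)\subseteq(A/I)^n$, whose coordinates I lift arbitrarily to $v_i\in A^n$. Because $A$ is local, the morphism $v_i\colon\Spec A\to\Aff^n_A$ lands in $W$ as soon as its value at the unique closed point does, and that value is the reduction of $\bar v_i$, which lies in $W_{A/\mathfrak m}$; hence $v_i\in W(A)=V(A)$ and $g:=v_1v_2\in X(A)$ reduces to $\bar g$.

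The \textbf{main obstacle} is really the combination of the affine-space tool with the verification that each auxiliary open ($V\cap gV^{-1}$, respectively $m^{-1}(U)\cap V\times_A V$) has nonempty fibres over \emph{every} maximal ideal. This is exactly the place where all three hypotheses are used: infinite residue fields to find rational points avoiding a proper closed subset, the semilocality of $A$ to patch these points by the Chinese remainder theorem, and the fibrewise density of $V$ in the irreducible fibres of $X$ to see that the translated opens stay nonempty on each fibre. Once these fibrewise nonemptiness facts are in place, everything else is formal manipulation with the group law and with lifting of coordinates.
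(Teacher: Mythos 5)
Your proof is correct and follows essentially the same route as the paper's, which is only a three-line sketch (claim $(i)$ ``is clear since $(U\cap V)(A)\neq\emptyset$'', claim $(ii)$ is ``proved exactly as in the field case'', claim $(iii)$ ``follows from $(ii)$''): your CRT-plus-infinite-residue-fields point-finding tool is precisely what makes the paper's ``clear'' step and the field-case translation argument for $(ii)$ work over a semilocal base, and your deduction of $(iii)$ by lifting coordinates over the local ring is the intended content of ``follows from $(ii)$''. The only cosmetic deviation is that you obtain $(i)$ via the multiplication map $m^{-1}(U)\cap(V\times_A V)$ where the paper goes directly through $U\cap V$; your explicit flagging of the fibrewise nonemptiness/irreducibility hypotheses (which are genuinely needed for the statement to hold and are satisfied in the paper's applications) is, if anything, more careful than the original.
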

\begin{proof}
The claim $(i)$ is clear since $(U\cap V)(A)\neq\emptyset$. The claim $(ii)$ is proved exactly as in the field
case. The claim $(iii)$ follows from $(ii)$.
\end{proof}


\begin{lem}\label{lem:Ueta}
Let $R$ be a local domain with an infinite residue field and the field of fractions $K$.
Let $G,H,T$ and $G',H',T'$ be reductive group schemes
over $R$, such that $H$ and $H'$ are $R$-rational, and there are two short exact
sequences of $R$-group scheme homomorphisms~\eqref{eq:seq}
and
\begin{equation}\label{eq:seq'}
1\to G'\to H'\xrightarrow{\mu'} T'\to 1.
\end{equation}
Assume also that
$\beta:T'\to T$ is a $R$-group scheme homomorphism such that
\begin{equation}\label{eq:mumu}
\beta(\mu'(H'(F)))\subseteq \mu(H(F))
\end{equation}
for any field extension $F$ of $K$.
Then there exists an open dense $R$-subscheme $U\subseteq H'$ and an $R$-morphism $\eta:U\to H$ such that
$\mu\circ\eta=\beta\circ\mu'|_U$, $1_{H'}\in U(R)$, and $\eta(1_{H'})=1_H$.
\end{lem}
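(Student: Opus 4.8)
The plan is to build the lift first generically over the fraction field $K$ and then spread it out and translate it so that it becomes an $R$-morphism that is defined at the identity and sends it to $1_H$. For the generic construction, let $F=K(H')$ be the function field of the generic fibre $H'_K$, which is integral since $H'$ is smooth over the domain $R$ with geometrically integral fibres. Let $\xi\in H'(F)$ be the tautological generic point. Applying the hypothesis \eqref{eq:mumu} to the field extension $F$ of $K$, the point $\beta\mu'(\xi)\in T(F)$ lies in $\mu(H(F))$, so there is $h\in H(F)=H(K(H'))$ with $\mu(h)=\beta\mu'(\xi)$. But an $F$-point of the finite type $K$-scheme $H_K$ is the same as a $K$-rational map $\eta_K\colon H'_K\dashrightarrow H_K$, defined on some dense open $U_0\subseteq H'_K$. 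Shrinking $U_0$ if necessary, the two morphisms $\mu\circ\eta_K$ and $\beta\mu'$ into the separated scheme $T$ agree at the generic point, hence on all of $U_0$. This is the only place where \eqref{eq:mumu} enters.

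Next I would spread $\eta_K$ out over $R$. Extend $U_0$ to an open $\tilde U\subseteq H'$ with generic fibre $U_0$. Since $H'$ and $H$ are of finite type over $R$ and $\Spec K=\varprojlim_{0\neq f\in R}\Spec R_f$, a standard limit argument \cite[8.8.2]{EGAIV-3} extends $\eta_K$ to an $R$-morphism $g\colon W\to H$ on a nonempty open $W\subseteq\tilde U$. As $H'$ is integral, $W$ is dense, its generic point coincides with that of $H'$ and lies in the generic fibre; the agreement locus of $\mu\circ g$ and $\beta\mu'|_W$ is closed and contains this dense point, so $\mu\circ g=\beta\mu'|_W$ on all of $W$.

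Finally I would translate $g$ by a single well-chosen point, letting $g$ itself supply the element needed to normalize at the identity. Since $H'$ is an affine rational group scheme over the local ring $R$ with infinite residue field, Lemma \ref{lem:rat-surj}$(i)$ gives a point $b\in W(R)$. Put $U=b^{-1}W=\{x\in H'\colon bx\in W\}$, an open dense subscheme with $1_{H'}\in U(R)$ because $b=b\cdot 1_{H'}\in W$, and define $\eta\colon U\to H$ by $\eta(x)=g(b)^{-1}g(bx)$. Then $\eta$ is an $R$-morphism, $\eta(1_{H'})=g(b)^{-1}g(b)=1_H$, and for $x\in U$ one has $\mu(\eta(x))=\beta\mu'(b)^{-1}\beta\mu'(bx)=\beta\mu'(x)$ by the identity $\mu\circ g=\beta\mu'$ on $W$. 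This produces the required $U$ and $\eta$.

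The step I expect to require the most care is the passage from $K$ to $R$. Over a field the generic lift together with a translation by rational points suffices, but over $R$ one must both extend the rational map integrally and arrange that the \emph{whole} identity section, not merely its generic point, lands in the domain. A naive translation would demand an element $c\in H(R)$ with $\mu(c)=\beta\mu'(a)$, whose existence over $R$ (rather than over $K$, where it follows from \eqref{eq:mumu}) is unclear. Choosing the translating point $b$ inside $W(R)$ and taking $c=g(b)^{-1}$ circumvents this: the spread-out morphism $g$ automatically furnishes a lift with the correct $\mu$-value, so no independent surjectivity of $\mu$ on $R$-points is needed. The remaining points, namely the integrality of $H'$ and the density of $W$ that let the agreement of $\mu\circ g$ and $\beta\mu'$ propagate from the generic point, are routine.
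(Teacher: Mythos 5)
Your overall strategy coincides with the paper's: the paper proves this lemma by citing \cite[Lemma 3.1]{ChM-su} for the field case and reproducing that argument over $R$, i.e.\ exactly your scheme ``generic lift via~\eqref{eq:mumu} $\Rightarrow$ rational map $\Rightarrow$ translate by an $R$-point of its domain,'' with the needed $R$-point supplied by Lemma~\ref{lem:rat-surj}. However, your execution of the spreading-out step opens a genuine gap, and it occurs at precisely the point you yourself flag as delicate. The limit argument \cite[Corollaire 8.8.2]{EGAIV-3} applied to $\Spec K=\varprojlim_{0\neq f\in R}\Spec R_f$ produces a morphism defined on $W=\tilde U\times_R R_f$ for \emph{some} nonzero $f\in R$, i.e.\ on an open lying over the basic open subset $\Spec R_f\subseteq\Spec R$; nothing in the argument lets you take $f$ to be a unit. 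If $\dim R\ge 1$ and $f$ lies in the maximal ideal $\mathfrak m$, then $W$ is disjoint from the closed fiber $H'_{\kappa}$ of $H'$. But any $b\in W(R)$ is a section of the structure morphism $H'\to\Spec R$ and hence sends the closed point of $\Spec R$ into the closed fiber; therefore $W(R)=\emptyset$ for such a $W$. Consequently the appeal to Lemma~\ref{lem:rat-surj}$(i)$ to produce $b\in W(R)$ is illegitimate: for an open disjoint from the closed fiber the conclusion of that lemma is simply false (its proof finds a rational point of $U\cap V$ in a fiber over a maximal ideal and lifts it using that $R$ is local, so it implicitly requires the open to meet the closed fiber), and your $W$ is exactly an open of this bad kind. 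Without $b$ the translation $\eta(x)=g(b)^{-1}g(bx)$, and with it the normalization at $1_{H'}$, collapses. Topological density of $W$, which you do verify from irreducibility of $H'$, is not the relevant property; what matters is that the domain of the lift meet the closed fiber.

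The paper's ``verbatim'' route avoids committing to this defective $W$: one takes the germ of the lift $h\in H(E)$ at the generic point of the \emph{integral} scheme $H'$ (not merely of its generic fiber) and works with its full domain of definition, which is a well-defined open of $H'$ by separatedness of $H$ and reducedness of $H'$, and is not a priori confined over any $\Spec R_f$; only then is Lemma~\ref{lem:rat-surj}$(i)$ invoked to get a translating $R$-point. Indeed, the entire content of extending Chernousov--Merkurjev's field case to a local domain is the assertion that this domain of definition contains an $R$-point, which is what the density statement of Lemma~\ref{lem:rat-surj} is for; your closing remark correctly identifies this as the crux, but the limit-argument construction re-introduces the difficulty instead of resolving it. To repair your proof, replace $W$ by the maximal open of $H'$ to which the germ of $g$ extends and apply Lemma~\ref{lem:rat-surj} to that open, as the paper does.
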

\begin{proof}
The case where $R=K$ is a field was settled in~\cite[Lemma 3.1]{ChM-su} under the assumption that
$H'(R)$ is dense in $H'$.
The proof of the general case is obtained by reproducing the proof of that lemma verbatim,
taking into account that $H'(R)$ is dense in $H'$ by Lemma~\ref{lem:rat-surj} (note Remark~\ref{rem:ChM}).

\end{proof}

\begin{lem}\label{lem:K1transf}
\ \\
\indent $(i)$ In the setting of Lemma~\ref{lem:Ueta}, the natural map
$\bar\eta:U(R)\to H(R)/\bigl(RG(K)\cap H(R)\bigr)$ induced by $\eta$
extends uniquely to a homomorphism
$$
\tilde\beta:H'(R)\to H(R)/\bigl(RG(K)\cap H(R)\bigr)
$$
such that $\tilde\beta\bigl(RG'(K)\cap H'(R)\bigr)=1$. This homomorphism is independent of the
choice of a pair $(U,\eta)$ satisfying the claim of Lemma~\ref{lem:Ueta}. If $R=K$ is a field, then $\tilde\beta$
coincides with the map of the same name constructed in~\cite[Lemma 3.2]{ChM-su}, assuming Remark~\ref{rem:ChM}.

$(ii)$ Assume in addition that $G$ is simply connected, and $G$ and $G'$ possess
strictly proper parabolic $R$-subgroups that we denote by $P$ and $P'$ respectively.
If   $K_1^{G,P}(R)\to K_1^{G,P}(K)$ is injective,
then $\tilde\beta$ induces a homomorphism
$$
\hat\beta:K_1^{G',P'}(R)\to K_1^{G,P}(R).
$$
For any ring homomorphism $f:R\to S$, where
$S$ is a local domain with
the fraction field $E$ such that $K_1^{G,P}(S)\to K_1^{G,P}(E)$ is injective,
the map $\hat\beta$ is functorial with respect to $f$.
\end{lem}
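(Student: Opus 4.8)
The plan is to adapt the construction of~\cite[Lemma 3.2]{ChM-su}, replacing the ground field there by the local domain $R$ and invoking Lemma~\ref{lem:rat-surj} at every point where density of rational points over an infinite field was used; the morphism $\eta$ and the open $U$ are those produced by Lemma~\ref{lem:Ueta}. For part $(i)$ I would first record the \emph{partial multiplicativity} of $\bar\eta$. For $u,v\in U(R)$ with $uv\in U(R)$ set
$$
c(u,v)=\eta(u)\,\eta(v)\,\eta(uv)^{-1}\in H(R).
$$
Since $\mu\circ\eta=\beta\circ\mu'$ on $U$ and $\mu'$ is a homomorphism into the commutative group $T'$, one computes $\mu(c(u,v))=\beta(\mu'(u)\mu'(v)\mu'(uv)^{-1})=1$, so $c(u,v)\in G(R)$ and $c(1,1)=1$. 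Fixing $u$, the assignment $v\mapsto c(u,v)$ is an $R$-morphism from the dense open $R$-subscheme $U\cap u^{-1}U\ni 1_{H'}$ of the rational group $H'$ into $G$, sending $1_{H'}$ to $1_G$. Exactly as in~\cite[Lemma 3.2]{ChM-su}, a morphism out of (a dense open of) a rational variety carrying the base point to $1$ sends rational points to elements $R$-equivalent to $1$ over $K$; hence $c(u,v)\in RG(K)\cap H(R)$, i.e. $\bar\eta(uv)=\bar\eta(u)\,\bar\eta(v)$.

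Next I would extend $\bar\eta$ to all of $H'(R)$. By Lemma~\ref{lem:rat-surj}$(i)$ the nonempty open $U\cap gU^{-1}$ has an $R$-point $u$; then $w:=u^{-1}g\in U(R)$ and $g=uw$, so in particular $H'(R)=U(R)U(R)$. I would define $\tilde\beta(g)=\bar\eta(u)\,\bar\eta(w)$. Independence of the factorisation and the homomorphism property follow formally from partial multiplicativity, comparing two factorisations through a third, sufficiently generic one supplied again by Lemma~\ref{lem:rat-surj}$(i)$, just as in the field case. Uniqueness of the extension is clear since $U(R)$ generates $H'(R)$, and independence of the pair $(U,\eta)$ follows by comparing two such pairs on the dense open where both morphisms are defined. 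The vanishing $\tilde\beta(RG'(K)\cap H'(R))=1$ is once more an instance of the rational-curve argument: an element of $G'(R)$ that is $R$-equivalent to $1$ over $K$ is joined to $1$ by a chain of rational curves, which the formula for $\tilde\beta$ carries into $RG(K)$. For $R=K$ every step reduces verbatim to~\cite[Lemma 3.2]{ChM-su}, giving the asserted coincidence.

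For part $(ii)$ the key observation is that the injectivity hypothesis pins down the target. As $G$ is simply connected with a strictly proper parabolic, $E_P(K)=RG(K)$ by~\cite{Gil}, so $E_P(R)\subseteq RG(K)$; injectivity of $K_1^{G,P}(R)\to K_1^{G,P}(K)$ gives conversely $RG(K)\cap G(R)\subseteq E_P(R)$. Since $R\hookrightarrow K$ and $T$ is affine, $G(K)\cap H(R)=G(R)$, whence
$$
RG(K)\cap H(R)=RG(K)\cap G(R)=E_P(R),
$$
so that $G(R)/(RG(K)\cap H(R))=K_1^{G,P}(R)$. For $g\in G'(R)$ one has $\mu(\tilde\beta(g))=\beta(\mu'(g))=1$, so $\tilde\beta$ restricts to a homomorphism $G'(R)\to K_1^{G,P}(R)$; and $E_{P'}(R)\subseteq E_{P'}(K)\subseteq RG'(K)$ gives $E_{P'}(R)\subseteq RG'(K)\cap H'(R)$, so $\tilde\beta(E_{P'}(R))=1$ by $(i)$. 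Thus $\tilde\beta|_{G'(R)}$ descends to the desired $\hat\beta:K_1^{G',P'}(R)=G'(R)/E_{P'}(R)\to K_1^{G,P}(R)$.

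Functoriality in $f:R\to S$ is then immediate from the explicit formula together with the naturality of $\eta$ under base change: writing $g=uw$ with $u,w\in U(R)$ and base-changing along $f$ gives $g_S=u_S w_S$ with $u_S,w_S\in U_S(S)$, so that $f_*(\tilde\beta(g))=\tilde\beta_S(g_S)$, while the injectivity hypothesis over $S$ identifies the target with $K_1^{G,P}(S)$ as above. I expect the main obstacle to be the geometric heart of $(i)$, namely the $R$-triviality of the cocycle $c(u,v)$ and of $\tilde\beta$ on $RG'(K)\cap H'(R)$; both rest on the fact that an $R$-morphism out of a rational variety preserves $R$-equivalence, and the point of the present setting is to make this argument available over the local domain $R$, where Lemma~\ref{lem:rat-surj} takes over the role played by density of rational points over infinite fields in~\cite{ChM-su}.
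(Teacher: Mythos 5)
Your proposal takes a genuinely different route from the paper: you rebuild the Chernousov--Merkurjev construction from scratch over $R$ (cocycle $c(u,v)$, extension via $H'(R)=U(R)U(R)$, well-definedness by generic factorizations), whereas the paper treats \cite[Lemma 3.2]{ChM-su} over the fraction field $K$ as a black box --- legitimate since $K$ is infinite --- obtaining a homomorphism $\tilde\beta:H'(K)\to H(K)/RG(K)$ that kills $RG'(K)$ and is given by the explicit formula $\tilde\beta(g)=\bar\eta(g_1)\bar\eta(g_2)$, and then uses Lemma~\ref{lem:rat-surj} only once: any $g\in H'(R)$ factors as $g=g_1g_2$ with $g_i\in U(R)$, so the field-level map restricts to $H'(R)\to H(R)/\bigl(RG(K)\cap H(R)\bigr)$, and every asserted property (well-definedness, independence of $(U,\eta)$, the vanishing, and the coincidence with \cite{ChM-su} when $R=K$) is inherited for free. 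Your treatment of part $(ii)$ and of functoriality essentially coincides with the paper's, and your cocycle step is sound: there the rational-variety fact is applied to values of an $R$-morphism at $R$-points regarded as $K$-points, which is legitimate (the same remark repairs your one-line claim about independence of $(U,\eta)$, applied to $\eta_1\eta_2^{-1}$).

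The genuine gap is exactly at the step you call the geometric heart: the vanishing $\tilde\beta\bigl(RG'(K)\cap H'(R)\bigr)=1$. An element $g$ of this set is $R$-equivalent to $1$ \emph{over $K$}: the witnessing data are points of $G'$ over $K[t]_{(t),(t-1)}$ and intermediate points of $G'(K)$, which in general do not lie in $H'(R)$. Your $\tilde\beta$ is defined only on $H'(R)$, so the phrase ``the formula for $\tilde\beta$ carries the chain into $RG(K)$'' does not parse --- there is nothing to apply your map to along the chain. Unlike the cocycle step, this cannot be fixed by viewing $R$-points as $K$-points, because here the input itself lives only over $K$. To close the gap you must first have the homomorphism on all of $H'(K)$ together with its vanishing on $RG'(K)$ --- that is, either re-prove or simply invoke \cite[Lemma 3.2]{ChM-su} over $K$ --- and then observe that your $R$-level map is its restriction (immediate from the common formula, since $H(R)/\bigl(RG(K)\cap H(R)\bigr)$ injects into $H(K)/RG(K)$). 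At that point your construction collapses into the paper's proof; the paper's route exists precisely to avoid redoing the Chernousov--Merkurjev argument, for which Lemma~\ref{lem:rat-surj} alone is not a sufficient substitute.
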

\begin{proof}
$(i)$ By~\cite[Lemma 3.2]{ChM-su} the natural map $\bar\eta:U(K)\to H(K)/RG(K)$ induced by $\eta$
extends uniquely to a homomorphism
$$
\tilde\beta:H'(K)\to H(K)/RG(K)
$$
such that $\tilde\beta(RG'(K))=1$. In fact,
$\tilde\beta$ is given by the following formula~\cite[Proposition 1.4]{ChM-su}:
for any $g\in H'(K)$ and any $g_1,g_2\in U(K)$
such that $g=g_1g_2$ set
\begin{equation}\label{eq:g1g2}
\tilde\beta(g)=\bar\eta(g_1)\bar\eta(g_2).
\end{equation}
It is proved on~\cite[p. 183]{ChM-su} that $\tilde\beta$ is correctly defined and independent of the choice
of the pair $(U,\eta)$.
By Lemma~\ref{lem:rat-surj} for any $g\in U(R)$ we can find $g_1,g_2\in U(R)$ such that $g=g_1g_2$, therefore,
this homomorphism $\tilde\beta$ restricts to a homomorphism
$$
\tilde\beta:H'(R)\to H(R)/\bigl(RG(K)\cap H(R)\bigr)\le H(K)/RG(K),
$$
which is also correctly defined and independent of $(U,\eta)$.

$(ii)$ First we show that
\begin{equation}\label{eq:beta-GR}
\tilde\beta\bigl(G'(R)\bigr)\subseteq G(R)/\bigl(RG(K)\cap G(R)\bigr).
\end{equation}
Assume that for $g_1,g_2\in U(R)$ we have $g_1g_2\in G'(R)$. Then
$$
\mu\bigl(\eta(g_1)\eta(g_2)\bigr)=\mu(\eta(g_1))\mu(\eta(g_2))=\beta(\mu'(g_1))\beta(\mu'(g_2))=\beta(\mu'(g_1g_2))=1.
$$
Therefore, $\eta(g_1)\eta(g_2)\in G(R)$. This implies~\eqref{eq:beta-GR}.

Since $K_1^{G,P}(R)$ injects into $K_1^{G,P}(K)$, and
$E_{P}(K)=RG(K)$ for any strictly proper parabolic $R$-subgroup $P$ of $G$ by~\cite[Th\'eor\`eme 7.2]{Gil},
we have $RG(K)\cap G(R)=E_{P}(R)$.
Since $E_{P'}(R)\le E_{P'}(K)\le RG'(K)$, and $\tilde\beta(RG'(K))=1$, the map $\tilde\beta$
induces a correctly defined homomorphism
$$
\hat\beta:K_1^{G',P'}(R)=G'(R)/E_{P'}(R)\to G(R)/E_{P}(R)=K_1^{G,P}(R).
$$
The functoriality of $\hat\beta$ readily follows from~\eqref{eq:g1g2}.
\end{proof}

Denote by $\Groups$ the category of groups.

\begin{lem}\label{lem:Aring}
Let $k$ be a field, let $A$ be an essentially smooth $k$-domain. Let $F_1:\Sm_A\to \Groups$ be a presheaf, and let
$F_2:\Sm_A\to \Groups$ be a Nisnevich sheaf. Assume that
for any essentially smooth $A$-domain $R$ with the field of
fractions $E$ the homomorphism $F_2(R)\to F_2(E)$ is injective, and if $R$ is also Henselian local,
then there exists a homomorphism
$$
\lambda_R:F_1(R)\to F_2(R),
$$
functorial with respect to any homomorphism of essentially smooth local Henselian $A$-domains $R$.
Then there is a unique group homomorphism $\lambda_A:F_1(A)\to F_2(A)$ such that for any henselization
$A_p^h$ of a localization  of $A$ at a prime ideal $p$ the following diagram commutes:
\begin{equation}\label{eq:Aph}
\xymatrix@R=15pt@C=50pt{
F_1(A)\ar[d]^{F_1(i_p^h)}\ar@{-->}[r]^{\lambda_A}&F_2(A)\ar[d]^{F_2(i_p^h)}\\
F_1(A_p^h)\ar[r]^{\lambda_{A_p^h}}&F_2(A_p^h).\\
}
\end{equation}
Here $i_p^h$ is the natural inclusion $A\to A_p^h$.

\end{lem}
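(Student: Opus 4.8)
The plan is to pin down $\lambda_A$ by its behavior at the generic point and then to show that the value this forces descends along the injection supplied by the sheaf property.

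First I would settle uniqueness, which also isolates the only possible candidate. The zero ideal $p=(0)$ yields the localization $A_{(0)}=K$, the field of fractions of $A$; being a field it is already Henselian local, so $A_{(0)}^h=K$ and $i_{(0)}^h\colon A\to K$ is the generic inclusion. Commutativity of~\eqref{eq:Aph} for this $p$ reads $F_2(i_{(0)}^h)\circ\lambda_A=\lambda_K\circ F_1(i_{(0)}^h)$. Since $F_2(i_{(0)}^h)\colon F_2(A)\to F_2(K)$ is injective by hypothesis, $\lambda_A$ is completely determined by the composite $\Lambda:=\lambda_K\circ F_1(i_{(0)}^h)\colon F_1(A)\to F_2(K)$. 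This proves uniqueness and reduces existence to a single claim: for every $x\in F_1(A)$ the element $\Lambda(x)\in F_2(K)$ lies in the image of the injection $F_2(A)\hookrightarrow F_2(K)$. Once the claim holds I define $\lambda_A(x)$ to be the unique preimage; it is a group homomorphism because $\Lambda$ is and $F_2(A)\hookrightarrow F_2(K)$ is an injective homomorphism.

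To prove the claim I would work on the small Nisnevich site of $\Spec A$, which is legitimate as $A$ is Noetherian of finite Krull dimension and $F_2$ is a Nisnevich sheaf. For each $p\in\Spec A$ the ring $A_p^h$ is an essentially smooth local Henselian $A$-domain --- its normality, hence integrality, follows from excellence of $A_p$ together with essential smoothness of $A$ over $k$ --- so $\lambda_{A_p^h}$ is defined; set $\sigma_p:=\lambda_{A_p^h}\bigl(F_1(i_p^h)(x)\bigr)\in F_2(A_p^h)$. Writing $E_p$ for the field of fractions of $A_p^h$ and applying functoriality of $\lambda$ along $A_p^h\hookrightarrow E_p$ and along $K\hookrightarrow E_p$ (all of these being essentially smooth local Henselian $A$-domains), the image of $\sigma_p$ in $F_2(E_p)$ agrees with the image of $\Lambda(x)$; that is, every germ $\sigma_p$ restricts generically to $\Lambda(x)$. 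Since $F_2(A_p^h)$ is the Nisnevich stalk of $F_2$ at $p$, i.e. the filtered colimit of $F_2(\mathcal O(U))$ over \'etale neighborhoods $(U,u)\to(\Spec A,p)$ (continuity of $F_2$), the germ $\sigma_p$ is represented by some $t_p\in F_2(U_p)$. The images of the $U_p\to\Spec A$ cover $\Spec A$ with trivial residue field extensions, hence refine to a Nisnevich cover, and by Noetherianity I pass to a finite subfamily. On any overlap $U_p\times_{\Spec A}U_q$ each connected component is an essentially smooth $A$-domain whose field of fractions extends $K$, and both $t_p$ and $t_q$ restrict there to elements with the same image $\Lambda(x)$ in that function field; the injectivity hypothesis for $F_2$ on domains forces $t_p=t_q$ on the overlap. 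Thus $\{t_p\}$ is a matching family, and the sheaf condition produces a global $\tilde s\in F_2(A)$ with $\tilde s|_K=\Lambda(x)$, which is the required lift. Commutativity of~\eqref{eq:Aph} for an arbitrary $p$ then follows by comparing $F_2(i_p^h)(\tilde s)$ and $\sigma_p$ inside $F_2(E_p)$, where they coincide, and invoking injectivity of $F_2(A_p^h)\hookrightarrow F_2(E_p)$.

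The main obstacle is precisely this gluing step: upgrading the compatible family of germs $\sigma_p$ to a genuine section of $F_2$ over $A$. The delicate points are (a) verifying that each $A_p^h$ is an integral essentially smooth $A$-algebra so that $\lambda_{A_p^h}$ is available, (b) checking that the \'etale neighborhoods $U_p$ assemble into a Nisnevich cover of $\Spec A$, and (c) matching the representatives $t_p$ on overlaps, which I reduce to the generic points of the components of the overlaps and then dispatch using injectivity of $F_2$ on domains. Uniqueness, the homomorphism property, and compatibility with all the maps $i_p^h$ are then formal consequences of the construction of $\tilde s$.
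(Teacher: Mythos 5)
Your uniqueness argument and the reduction of existence to the single claim that $\Lambda(x)$ lies in the image of $F_2(A)\hookrightarrow F_2(K)$ are correct, and your gluing strategy (one Nisnevich cover built from \'etale neighborhoods of \emph{all} primes at once) is a genuinely different route from the paper, which instead argues by induction on $\dim A$, reduces to $A$ local, and glues over a two-piece cover consisting of one \'etale neighborhood $A'$ of the closed point together with opens $\Spec A_{f_i}$ covering the punctured spectrum, the latter being handled by the induction hypothesis. However, your matching step has a genuine gap. You take an \emph{arbitrary} representative $t_p\in F_2(U_p)$ of the germ $\sigma_p$ and assert that on each component of an overlap both $t_p$ and $t_q$ restrict to the image of $\Lambda(x)$ in the function field of that component. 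What you actually know is weaker: writing $L_p$ for the function field of the component of $U_p$ containing the distinguished point, the elements $t_p|_{L_p}$ and $\Lambda(x)|_{L_p}$ have the same image in $F_2(E_p)$, where $E_p=\operatorname{Frac}(A_p^h)$. To conclude that they are equal in $F_2(L_p)$ you would need $F_2(L_p)\to F_2(E_p)$ to be injective, and this is \emph{not} among the hypotheses: the injectivity assumption concerns a domain mapping into \emph{its own} fraction field, never a field extension such as $L_p\subseteq E_p$ (and for sheaves like $K_{1,Nis}^G$ such maps genuinely can have kernel). Since a germ controls its representatives only after passing to sufficiently small neighborhoods, the asserted equality can fail for the pair $(U_p,t_p)$ you fixed, and with it the matching-family property.

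The gap is repairable, and the repair is exactly the step the paper performs when it chooses $A'$ so that ``$y$, $y'$ are still mapped to the same element in $F_2(K\otimes_A A')$'': because the extension of $F_2$ by continuity satisfies $F_2(E_p)=\varinjlim_U F_2(L_U)$ over connected affine \'etale neighborhoods $U$ of $p$, the equality $t_p|_{L_U}=\Lambda(x)|_{L_U}$, which holds in the colimit, already holds at some finite stage $U'\to U_p$; one must replace $(U_p,t_p)$ by $(U',t_p|_{U'})$ before gluing. Two smaller omissions should also be fixed: (a) each $U_p$ must first be shrunk to the connected component containing the distinguished point, since $\sigma_p$ puts no constraint on $t_p$ over the other components, where matching would then fail; (b) your final assertion $\tilde s|_K=\Lambda(x)$ is not formal either --- it is proved by computing through a member of the finite subfamily whose function field is exactly $K$ (such a member exists because the Nisnevich condition at the generic point forces it), as for a member with strictly larger function field one would again need injectivity of $F_2$ along a field extension to descend. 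With these repairs your non-inductive argument goes through; its payoff is that it avoids the paper's dimension induction entirely, at the price of having to control representatives at every prime simultaneously rather than only at the closed point.
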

\begin{proof}
As usual, we extend the presheaves $F_1$ and $F_2$ by continuity to the category of Noetherian
essentially smooth $A$-schemes. This extension is well-defined thanks
to~\cite[Corollaire 8.13.2]{EGAIV-3}. The extension of $F_2$ is a Nisnevich sheaf
e.g. by~\cite[\href{http://stacks.math.columbia.edu/tag/00XI}{Tag 00XI}]{Stacks}.

Let $K$ be the fraction field of $A$. Set
$$
\theta=F_1(i_0^h):F_1(A)\to F_1(K).
$$
For any prime ideal $p$ of $A$, let $i_p:A_p\to K$ be the natural homomorphism.
By the assumption of the lemma, the natural map $F_2(i_0^h)=F_2(i_0):F_2(A)\to F_2(K)$ is injective, and for any
$p\in\Spec A$
the map $F_2(i_p):F_2(A_p)\to F_2(K)$ is injective; we identify $F_2(A)$
and $F_2(A_p)$ with the respective subgroups of $F_2(K)$. Clearly, this implies that once $\lambda_A$
exists, it is unique.

We construct $\lambda_A$ as follows. First we show that
\begin{equation}\label{eq:iF1F2}
\lambda_K\circ \theta\bigl(F_1(A)\bigr)\subseteq F_2(A)
\end{equation}
inside $F_2(K)$. Then we set
\begin{equation}\label{eq:lambda-def}
\lambda_A(x)=\lambda_K(\theta(x))\quad\mbox{for any}\quad x\in F_1(A),
\end{equation}
and we show that all diagrams~\eqref{eq:Aph} are commutative. Note that this definition of $\lambda_A$ automatically implies
that the diagram
\begin{equation}\label{eq:Ap}
\xymatrix@R=15pt@C=40pt{
F_1(A)\ar[d]^{F_1(i_p)}\ar[r]^{\lambda_A}&F_2(A)\ar[d]^{F_2(i_p)}\\
F_1(A_p)\ar[r]^{\lambda_{A_p}}&F_2(A_p).\\
}
\end{equation}
is commutative for any $p\in\Spec A$.

We prove~\eqref{eq:iF1F2} and the commutativity of the diagrams~\eqref{eq:Aph} by induction on dimension of $A$.
If $\dim A=0$, then $A=K$ and the statement is trivially true. Assume that $\dim A>0$.
Note that
$$
\lambda_K\circ \theta(F_1(A))\subseteq \lambda_K\circ F_1(i_p)\bigl(F_1(A_p)\bigr)
$$
for any $p\in\Spec A$,  and, clearly,
$$
F_2(A)=\bigcap_{p\in\Spec A}F_2(A_p)\subseteq F_2(K).
$$
 Therefore, in order to prove~\eqref{eq:iF1F2} and the commutativity of~\eqref{eq:Aph} (provided
the commutativity of~\eqref{eq:Ap}), we can assume right away that
$A=A_p$ is an essentially smooth
local $k$-domain with a maximal ideal $p$.

Fix $x\in F_1(A)$, and set $y=\lambda_K(\theta(x))\in F_2(K)$. Let
$\phi^h:A\to A^h$ be the henselization of $A$.
Set $y^h=\lambda_{A^h}(\phi^h(x))\in F_2(A^h)$. The functoriality of $\lambda_R$ for Henselian local rings $R$
together with the sheaf property
of $F_2$ implies that $y\in F_2(K)$ and $y^h\in F_2(A^h)$ are mapped to the same element in
$F_2(K\otimes_A A^h)$, since $\theta(x)$ and $\phi^h(x)$ are mapped to the same element in $F_1(K\otimes_A A^h)$.
Let $\phi':A\to A'$ be an \'etale local ring homomorphism, such that $\phi^h$ factors
through $A'$, $y^h$ lifts to $y'\in F_2(A')$, and $y$, $y'$ are still mapped
to the same element in $F_2(K\otimes_A A')$.

Let $f_1,\ldots,f_n\in p$ be such that
$$
\Spec(A)\setminus p=\bigcup_{i=1}^n\Spec (A_{f_i}).
$$
Then $\Spec (A_{f_i})$, $1\le i\le n$, and $\Spec(A')$ together form a Nisnevich cover of $\Spec(A)$.

Note that since $A$ is local, $\dim(A_{f_i})<\dim(A)$ for any $i$, and thus $A_{f_i}$'s satisfy
the induction hypothesis with respect to
the presheaves $F_1|_{\Sm_{A_{f_i}}}$ and $F_2|_{\Sm_{A_{f_i}}}$. Consequently,
$$
y\in F_2(A_{f_i})\subseteq F_2(K)\quad\mbox{for any}\quad 1\le i\le n.
$$
Let $L_i$ be the fraction field of $A_{f_i}\otimes_A A'$.
Since $A'$ is a flat $A$-module, the map $A_{f_i}\otimes_A A'\to K\otimes_A A'$ is injective,
and, clearly, the map $A_{f_i}\otimes_A A'\to L_i$ factors through it. Since by the assumption of the lemma
the map $F_2(A_{f_i}\otimes_A A')\to F_2(L_i)$ is injective, the map
$$
F_2(A_{f_i}\otimes_A A')\to F_2(K\otimes_A A')
$$
is also injective. It follows that $y\in \bigcap\limits_{i=1}^n F_2(A_{f_i})$
and $y'\in F_2(A')$ are mapped to the same element in $F_2(A_{f_i}\otimes_A A')$
for every $i$. Since $F_2$ is a Nisnevich sheaf, it follows that $y$ and $y'$ have a common lift to $F_2(A)$.
Since $F_2(A)\to F_2(K)$ is injective by the assumption of the lemma, we conclude that $y\in F_2(A)$,
which proves~\eqref{eq:iF1F2}.
The above argument also implies the commutativity of the diagram
\begin{equation*}
\xymatrix@R=15pt@C=40pt{
F_1(A)\ar[d]\ar[r]^{\lambda_A}&F_2(A)\ar[d]\\
F_1(A^h)\ar[r]^{\lambda_{A^h}}&F_2(A^h),\\
}
\end{equation*}
which proves the commutativity of~\eqref{eq:Aph}.
\end{proof}

\begin{proof}[Proof of Theorem~\ref{thm:transf-gen}]
For any essentially
smooth field $K$ over $k$, the existence of the sequence~\eqref{eq:seq} implies that $K_1^G(K)$
is abelian by~\cite[Lemma 1.2]{ChM-su}. Hence by Lemma~\ref{lem:inj} the functor
$K_{1,Nis}^G$ on $\Sm_k$ takes values in $\Ab$.

Consider two essentially smooth $k$-algebras $A$ and $B$ and a finite flat generically \'etale ring homomorphism $f:A\to B$.
We set $T'=R_{B/A}(T_B)$, $G'=R_{B/A}(G_B)$, and $H'=R_{B/A}(H_B)$. Then, clearly, $G',H',T'$ are reductive
$A$-group schemes, $H'$ is $A$-rational, $G'$ is a simply connected reductive group,
and these groups form the short exact sequence~\eqref{eq:seq'} over $A$.
By Lemma~\ref{lem:K1G'} $G'$ contains a strictly proper parabolic $A$-subgroup and
for any essentially smooth $A$-algebra $R$ one has
$$
K_{1,Nis}^{G'}(R)=K_{1,Nis}^G(R\otimes_A B).
$$

For any $A$-algebra $R$, one can define a natural "norm" homomorphism
$$
N_{B/A}:T'(R)=T(R\otimes_A B)\to T(A),
$$
extending the usual norm in the field case, see~\cite[\S 2]{Pa-pur}.
Such norm homomorphisms are functorial with respect to arbitrary "base change"{} ring homomorphisms
$g:A\to A'$, i.e. the following diagram commutes~\cite[p. 5]{Pa-pur}:
\begin{equation*}
\xymatrix@R=15pt@C=60pt{
T(B)\ar[r]^{N_{B/A}}\ar[d]^{\id\otimes g}&T(A)\ar[d]^{g}\\
T(B\otimes_A A')\ar[r]^{N_{B\otimes_A A'/A'}}& T(A')\\
}
\end{equation*}
This norm homomorphism on $\Sm_A$ defines a homomorphism of $A$-group schemes
$$
N_{B/A}:T'\to T_A.
$$

Assume for the time being that $A$ is a domain, and let $K$ be its fraction field. We check that the functors $F_1=K_{1,Nis}^{G'}$
and $F_2=K_{1,Nis}^G$ on $\Sm_A$ satisfy the assumptions of Lemma~\ref{lem:Aring}. First, by Lemma~\ref{lem:inj}
for any local essentially smooth $A$-domain $R$ with the field of fractions $E$ the map $F_2(R)\to F_2(E)$
is injective. Second, assume that $R$ is, moreover, Henselian local. Then
$F_1(R)=K_1^{G'}(R)$ and $F_2(R)=K_1^G(R)$.
We claim that the two short exact sequences of $A$-groups~\eqref{eq:seq} and~\eqref{eq:seq'} and
the morphism $\beta=N_{B/A}:T'\to T_A$ after base change to $R$
satisfy all conditions of Lemmas~\ref{lem:Ueta} and~\ref{lem:K1transf}; then Lemma~\ref{lem:K1transf}
provides the map $\lambda_R=\hat\beta$ required in Lemma~\ref{lem:Aring}. Indeed, the only
thing to check is the condition~\eqref{eq:mumu} that we proceed to establish.
Since $f$ is generically \'etale,
$K\otimes_A B$ is a finite product of finite separable field extensions of $K$.
Let $F$ be any field extension of $K$. Then $F\otimes_A B=F\otimes_K (K\otimes_A B)$, hence
$F\otimes_A B$ is also a finite product $\prod E_i$ of finite separable field extensions $E_i$ of $F$.
Since the norm homomorphism is compatible with base change, the norm homomorphism
$$
N_{B/A}: T'(F)=T(F\otimes_A B)\to T(F)
$$
is nothing but the product of the norm maps $N_{E_i/F}$. Hence
$\beta=N_{B/A}$ satisfies~\eqref{eq:mumu} for any field extension $F$ of $K$,
since by the assumption of our theorem Merkurjev's norm principle holds over $F$. It remains to note
that since $R$ is an essentially smooth $A$-domain, any field extension $F$ of $E$ is also a field extension of $K$.

We have proved that the functors $F_1=K_{1,Nis}^{G'}$ and $F_2=K_{1,Nis}^G$ on
$\Sm_A$ satisfy all assumptions of Lemma~\ref{lem:Aring}. Applying the claim of this lemma,
we set
$$
f_*=\lambda_A:K_{1,Nis}^{G'}(A)=K_{1,Nis}^G(B)\to K_{1,Nis}^G(A).
$$

Note that if $f:A\to B$ is a finite separable extension of fields, then
$\lambda_A=\lambda_K:K_1^{G'}(K)\to K_1^G(K)$ is the map
$\hat\beta$ constructed in Lemma~\ref{lem:K1transf}. That map is the natural restriction to $K_1^{G'}(K)=G'(K)/RG'(K)$
of the map $\tilde\beta$ identical to the map contructed in~\cite[Lemma 3.2]{ChM-su}.
Since $G'=R_{B/A}(G)$, one readily sees that
the map $\hat\beta$ is identical to the norm map $N_{B/A}:G(B)/RG(B)\to G(A)/RG(A)$ of~\cite[p. 187]{ChM-su},
as required.

Now we drop the assumption that $A$ is a domain, and let
$A=\prod\limits_{i=1}^n A_i$ be the decomposition of $A$ into a product of domains. Set $B_i=B\otimes_A A_i$.
Then we apply the above definition of $f_*$ to each base change $f_i:A_i\to B_i$ of $f$ via $A\to A_i$.
Clearly, we have
$$
K_{1,Nis}^G(B)\cong\prod_{i=1}^nK_{1,Nis}^G(B_i)\quad\mbox{and}\quad K_{1,Nis}^G(A)\cong\prod_{i=1}^nK_{1,Nis}^G(A_i).
$$
Then we set
$$
f_*=\prod (f_i)_*:K_{1,Nis}^G(B)\to K_{1,Nis}^G(A).
$$


We show that the transfer map $f_*$ defined above is compatible with any base change $A\to A'$, where
$A'$ is another essentially smooth $k$-algebra. Set $B'=A'\otimes_A B$ and let $f':A'\to B'$ be
the corresponding finite flat homomorphism. By the commutativity of the diagrams~\eqref{eq:Aph} in Lemma~\ref{lem:Aring},
it is enough to prove this claim assuming that both $A$ and $A'$ are henselian local rings essentially
smooth over $k$. Then the transfer maps corresponding to $A\to B$ and $A'\to B'$ are both represented
by the maps of the form $\hat\beta$ constructed in Lemma~\ref{lem:K1transf}. Namely,
$$
f_*=\hat N_{B/A}:K_1^{G}(B)=K_1^{R_{B/A}(G_B)}(A)\to K_1^G(A)
$$
and
$$
f'_*=\hat N_{B'/A'}:K_1^{G}(B')=K_1^{R_{B'/A'}(G_{B'})}(A')\to K_1^G(A').
$$
Since $K_1^G(B')=K_1^G(A'\otimes_A B)=K_1^{R_{B/A}(G_B)}(A')$ and the norm homomorphisms are compatible with
base change~\cite[p. 5, (1)]{Pa-pur},
we conclude that the required diagram  for $f'_*$ and $f_*$ commutes by the last claim of Lemma~\ref{lem:K1transf}.

It remains to check that $f_*$ satisfies the property (1) in the statement of Theorem~\ref{thm:transf-gen}.
First, assume that
$B=B_1\times B_2$
is a product of two essentially smooth $k$-algebras, and let $f_1:A\to B_1$ and $f_2:A\to B_2$
be the natural maps. We need to show that $f_*=({f_1}_*,{f_2}_*)$. As in the previous case, we are reduced to the
case where $A$ is a henselian local ring, and then the result follows from Lemma~\ref{lem:K1transf} and the multiplicativity
property of norm maps~\cite[p. 5, (2)]{Pa-pur}.
Finally, the second claim of the property (1) follows immediately from the
normalization property of norm maps~\cite[p. 5, (3)]{Pa-pur}.
\end{proof}

\begin{proof}[Proof of Theorem~\ref{thm:transf}]
Note that we have established in Lemma~\ref{lem:inj} that $K_{1,Nis}^G$ is $\A1$-invariant.
If $k$ is finite or $G$ is of trialitarian type
${}^{3(6)}D_4$, then $G$ is quasi-split and hence $K_{1,Nis}^G$ is trivial and there is nothing to prove.
Indeed, it is trivial on fields
by e.g.~\cite[Th\'eor\`eme 6.1]{Gil}, and hence on all smooth $k$-schemes by Lemma~\ref{lem:inj}.

Assume from now on that $G$ is non-trialitarian and $k$ is infinite.
By~\cite[p. 189--190]{ChM-su} (see also~\cite{ChM} for more
details), for any simply connected semisimple algebraic group $G$ over $k$ of classical type $A_l$ or $D_l$
(as well as for groups of type $B_l$ and $C_l$)
there exists a rational reductive $k$-group $H$ and a $k$-torus $T$ such that $G$ fits into an exact sequence
of algebraic $k$-group homomorphisms
$$
1\to G\to H\xrightarrow{\mu} T\to 1.
$$
Also, for any field extension $F/k$, the homomorphism $\mu$ satisfies Merkurjev's norm principle over $F$.
Indeed, if $G$ is of type $A_l$, then this follows from~\cite[Theorem 1.1]{MeBa}. If $G$ is of type $D_l$,
then this follows from~\cite[Theorems 3.9 and 4.3]{M-norm} by~\cite[Remark 4.1]{ChM-su}
(cf.~\cite[Theorem 4.6]{ChM-su}).

Thus, we can apply Theorem~\ref{thm:transf-gen}.
Clearly, the transfer maps $f_*$ defined in that theorem
satisfy the properties (1), (2), and (3) of Defininition~\ref{def:transfer}.
The property (4) of Defininition~\ref{def:transfer}
is trivially true, since $K_{1,Nis}^G$ is $\A1$-invariant. Thus, Theorem~\ref{thm:transf} is proved.

\end{proof}

\begin{proof}[Proof of Corollary~\ref{cor:bir}]
By~\cite[Theorem 6.14]{Ross} combined with Theorem~\ref{thm:transf}, there is a Gersten-type exact sequence
of abelian groups
$$
1\to K_{1,Nis}^G(X)\to \bigoplus_{x\in X^{(0)}} i_{x*}(K_{1,Nis}^G)(X)\to
\bigoplus_{x\in X^{(1)}}i_{x*}\bigl((K_{1,Nis}^G)_{-1}\bigr)(X)\to\ldots
$$
Here dy definition $(K_{1,Nis}^G)_{-1}(X)=\coker\bigl(K_{1,Nis}^G(\Aff^1_X)\to K_{1,Nis}^G(\Gm_{,X})\bigr)$.
By Lemma~\ref{lem:inj} we have $K_{1,Nis}^G(\Aff^1_X)\cong K_{1,Nis}^G(\Gm_{,X})$, that is,
$(K_{1,Nis}^G)_{-1}$ is trivial.

\end{proof}

\section{Rigidity}

Let $k$ be an infinite field.
We prove a rigidity theorem for torsion $\Aff^1$-invariant presheaves with oriented weak transfers on $\Sm_k$.
Any such presheaf $\CF$ extends by continuity to the category of (Noetherian) essentially smooth $k$-schemes, which are
filtered projective limits of smooth $k$-schemes. This extension is well-defined thanks
to~\cite[Corollaire 8.13.2]{EGAIV-3}.


The main result of this section is the following theorem.

\begin{thm}\label{thm:rigidity-main}
Let $k$ be an infinite field. Let $\CF$ be an $\Aff^1$-invariant presheaf of abelian groups on $\Sm_k$ with oriented
weak transfers for affine varieties. Assume also that
$l\CF=0$ for an integer $l\in k^\times$. Let $R=\mathcal{O}_{X,x}^h$ be the henselization of the local
ring of a smooth $k$-scheme $X$ at a point $x\in X(k)$, and let $K$ be the field of fractions of $R$.
If $\CF(R)\to\CF(K)$ is injective, then
$$
\CF(k)\cong\CF(R).
$$
\end{thm}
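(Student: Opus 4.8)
Since $x\in X(k)$, the residue field of $R$ is $k$; write $j\colon\Spec k\to\Spec R$ for the closed point and $p\colon\Spec R\to\Spec k$ for the structure map, so that $p\circ j=\id_{\Spec k}$. The induced maps satisfy $j^*p^*=\id_{\CF(k)}$, so $p^*\colon\CF(k)\to\CF(R)$ is a split monomorphism with retraction $j^*$, and it suffices to prove $\ker j^*=0$, i.e. that $\CF(R)\to\CF(k)$ is injective. Using the hypothesis that $\CF(R)\to\CF(K)$ is injective, this reduces to the rigidity assertion: every $c\in\CF(R)$ with $c|_k=0$ satisfies $c|_K=0$ in $\CF(K)$. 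The whole problem is thus to propagate vanishing from the closed point to the generic point.

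The core is a relative-curve rigidity lemma obtained by transcribing Suslin's argument to the setting of oriented weak transfers. Let $S$ be a henselian local essentially smooth $k$-algebra with residue field $k$, and let $\pi\colon\mathcal C\to\Spec S$ be a smooth affine relative curve admitting a smooth relative compactification. If $\sigma_0,\sigma_1\colon\Spec S\to\mathcal C$ are two sections coinciding on the closed fibre, I claim $\sigma_0^*=\sigma_1^*\colon\CF(\mathcal C)\to\CF(S)$. The sections cut out relative Cartier divisors that are finite flat and generically \'etale over $S$ and agree on the closed fibre; since $S$ is henselian one interpolates them, after adding a common constant piece, by a finite flat generically \'etale closed subscheme $Z\subset\A1_S\times_S\mathcal C$ with trivial normal bundle whose fibres over $0$ and $1$ recover $\sigma_0$ and $\sigma_1$. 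Restricting the oriented weak transfer of $Z$ along the principal smooth divisors $\{0\},\{1\}\hookrightarrow\A1_S$, via the base-change compatibility of property (3) of Definition~\ref{def:transfer} and the $\A1$-invariance $\CF(\A1_S)\cong\CF(S)$, equates $\sigma_0^*$ and $\sigma_1^*$; the residual contribution lives in a group killed by a power of $l$ and is annihilated by the hypothesis $l\CF=0$.

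The passage from this lemma to the Henselian-local statement follows Suslin--Voevodsky and Gabber. I induct on $\dim R$, the case $R=k$ being trivial. A Gabber-type geometric presentation realizes a neighbourhood of $x$, after an \'etale map that does not change $R$, as a smooth affine relative curve over a smooth base $Y$ of dimension $\dim R-1$ with $\pi(x)\in Y(k)$. Henselizing $Y$ at $\pi(x)$ produces $\mathcal C\to\Spec S$ with $S$ henselian local of dimension $\dim R-1$ and residue field $k$, in which the point cutting out $R$ lifts to a section, while $\Spec K$ sits over the generic point of $S$. Representing $c\in\ker j^*$ by a class $a\in\CF(\mathcal C)$ with $a|_x=0$ and comparing the section through $x$ with the constant section through $x$ (which agree on the closed fibre), the rigidity lemma together with the homotopy invariance of $\CF$ forces $c|_K=0$. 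Carrying the injectivity $\CF(\,\cdot\,)\hookrightarrow\CF(\mathrm{Frac}(\,\cdot\,))$ as part of the inductive hypothesis at each henselian local stage --- a property that holds in the cases of interest, e.g. for $K_{1,Nis}^G$ by Lemma~\ref{lem:inj} --- the induction closes and yields $p^*\colon\CF(k)\xrightarrow{\ \cong\ }\CF(R)$.

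\textbf{The main obstacle} is the relative-curve rigidity lemma in the presence of only oriented weak transfers for affine varieties. One must arrange the interpolating subscheme $Z$ to be finite flat, generically \'etale, and equipped with a trivialisation of its normal bundle, with every scheme in sight affine, so that the transfers of Definition~\ref{def:transfer} actually apply; and one must verify that properties (1)--(4) --- in particular base change along the divisors $\{0\},\{1\}$, compatibility with disjoint unions, and compatibility with sections --- suffice to reproduce Suslin's interpolation without recourse to genuine transfers in the sense of Voevodsky. Controlling the $l$-torsion correction over the henselian base, and ensuring that the injectivity into the function field is available at every stage of the induction, are the remaining technical points.
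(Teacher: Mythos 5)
Your global strategy --- the split injection $\CF(k)\to\CF(R)$, the reduction via the hypothesis $\CF(R)\hookrightarrow\CF(K)$ to showing that classes vanishing at the closed point vanish at the generic point, and an induction on dimension driven by a rigidity statement for two sections of a smooth affine relative curve over a henselian base that agree on the closed fibre --- is exactly the paper's (Theorem~\ref{thm:HY3.1} and the proof of Theorem~\ref{thm:rigidity-main}, following Suslin--Voevodsky, Gabber and Hornbostel--Yagunov). The gap is in your proof of the core rigidity lemma, at the interpolation step, and that step is false as stated. The existence of a finite flat $Z\subset\Aff^1_S\times_S\mathcal C$ with fibres $Z_0=\sigma_0\sqcup D$ and $Z_1=\sigma_1\sqcup D$ would force, by homotopy invariance of the relative Picard group, the vanishing of $\mathcal{O}_{\bar{\mathcal C}}(\sigma_0-\sigma_1)$ in $\Pic\bigl(\bar{\mathcal C},(\bar{\mathcal C}\setminus\mathcal C)_{red}\bigr)$: families over $\Aff^1$ with prescribed fibres realize exactly rational equivalence of relative zero-cycles. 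Henselianness of $S$ does not give this vanishing. Gabber's argument --- the one the paper invokes in the proof of Theorem~\ref{thm:HY3.1} --- yields only that this class is $l$-divisible, and it is nonzero in general: already for $\mathcal C$ an affine piece of an elliptic curve $E$ over $S$, the class corresponds to an element of $\ker\bigl(E(S)\to E(k)\bigr)$, an $l$-divisible but nonzero group. Worse, your argument proves too much: if such a $Z$ existed, your own transfer computation (properties (1)--(3) of Definition~\ref{def:transfer} plus $\Aff^1$-invariance) would give $\sigma_0^*=\sigma_1^*$ on the nose, the transfer of the common piece $D$ cancelling in the abelian group, with no residual term at all; rigidity would then hold without the hypothesis $l\CF=0$. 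That is false: $\CF=\Gm$ is $\Aff^1$-invariant on $\Sm_k$, the norm map makes it a presheaf with oriented weak transfers, $R^\times\to K^\times$ is injective, and yet $\Gm(R)\neq\Gm(k)$ because of $1+\mathfrak m$. The torsion hypothesis must therefore enter \emph{before} the interpolation, not after it. This is how the paper proceeds: Theorem~\ref{thm:pairing} builds a bilinear pairing $\Pic(C,C_\infty)\times\CF(C^\circ)\to\CF(K)$ over the fraction field of the base, in which the interpolation-over-$\Aff^1$ mechanism (a meromorphic function $h$ taking the value $1$ on $C_\infty$) is applied only to \emph{principal} divisors; bilinearity, Gabber's $l$-divisibility and $l\CF=0$ then kill $\left<\mathcal{O}_C(P_0-P_1),-\right>=P_0^*-P_1^*$.

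Two further mismatches. First, your lemma asserts $\sigma_0^*=\sigma_1^*$ as maps into $\CF(S)$; the method only yields equality after composing with $\CF(S)\to\CF(\mathrm{Frac}\,S)$, since the pairing takes values in $\CF$ of the function field of the base. This is precisely why the injectivity hypothesis appears in Theorems~\ref{thm:HY3.1} and~\ref{thm:rigidity-main}; your stronger form would render it superfluous, another sign the lemma is overclaimed. Second, your induction carries the injectivity $\CF(\cdot)\hookrightarrow\CF(\mathrm{Frac}(\cdot))$ at every henselian local stage as an unproved side hypothesis (``holds in the cases of interest''), whereas the theorem assumes it only for $R$ itself. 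The paper eliminates this issue: it first replaces $X$ by $\Aff^n_k$ (the henselization depends only on an \'etale neighbourhood of $x$), then takes a generic linear projection $p:\Aff^n_k\to\Aff^{n-1}_k$ with a section $s$ through the point; the induced retraction shows that $\CF(R')\to\CF(R)$ is split injective, hence $\CF(R')\to\CF(K')$ is injective as a \emph{consequence} of the injectivity assumed for $R$, and the induction closes with no additional assumptions.
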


The proof of Theorem~\ref{thm:rigidity-main} imitates the proof of~\cite[Theorem 4.4]{SuVo}, relying on the following
result instead of~\cite[Theorem 4.3]{SuVo}.

\begin{thm}\label{thm:HY3.1}
Let $k$ be an infinite field. Let $\CF$ be an $\Aff^1$-invariant presheaf of abelian groups on $\Sm_k$ with oriented
weak transfers for affine varieties. Assume also that
$l\CF=0$ for an integer $l\in k^\times$.
Let $R$ be a Henselian local ring essentially smooth over $k$, and
let $K$ be its field of fractions. Let $P$
be the unique closed point of $\Spec R$. Let $f:M\to\Spec R$
be a smooth affine morphism of constant relative dimension $1$.
Let $s_0, s_1 : \Spec R \to M$ be  two sections of $f$ such that $s_0 (P) = s_1 (P)$.
Then the two compositions $\CF(M) \xra{s_i^*} \CF(R)\to \CF(K)$, $i=0,1$, are equal.
\end{thm}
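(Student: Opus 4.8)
The plan is to imitate the rigidity arguments of Suslin--Voevodsky~\cite{SuVo} and Hornbostel--Yagunov~\cite{HY}, reducing the desired equality to the construction of a single finite flat family over $\Aff^1_S$, where $S=\Spec R$, that interpolates the two sections, and then to feed this family into the weak-transfer axioms of Definition~\ref{def:transfer} together with the $\Aff^1$-invariance of $\CF$. Concretely, suppose for the moment that we can produce a closed subscheme $Z\subseteq M\times_S\Aff^1_S$, affine and smooth over $k$, such that the projection $q:Z\to\Aff^1_S$ to the interpolation parameter $u$ is finite flat and generically \'etale, and such that the fibre of $q$ over $\{u=0\}$ is exactly the graph $s_0(S)$ and the fibre over $\{u=1\}$ is exactly $s_1(S)$. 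Writing $p:Z\to M$ for the other projection and choosing a closed embedding $\tau$ as in Definition~\ref{def:transfer}, I would form
$$
\Phi=q_*^{\tau}\circ p^*:\CF(M)\to\CF(\Aff^1_S).
$$
The inclusions $i_0,i_1:S\hookrightarrow\Aff^1_S$ of the divisors $\{u=0\}$ and $\{u=1\}$ are closed embeddings of principal smooth divisors, so property~(3) of Definition~\ref{def:transfer} applies to the base change of $q$ along $i_j$; combined with the additivity~(1) and the section-compatibility~(2), which identify the transfer along the graph $s_j(S)\cong S$ with $s_j^*$, this gives $i_j^*\circ\Phi=s_j^*$ for $j=0,1$. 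Since $\CF$ is $\Aff^1$-invariant, the two restrictions $i_0^*,i_1^*:\CF(\Aff^1_S)\to\CF(R)$ coincide, both being inverse to the isomorphism $\CF(R)\xrightarrow{\sim}\CF(\Aff^1_S)$. Hence $s_0^*=s_1^*$ on $\CF(M)$, \emph{a fortiori} after composing with $\CF(R)\to\CF(K)$.

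First I would carry out the geometric construction of such a family. Since $k$ is infinite, all residue fields of $R$ are infinite, so after a closed embedding $M\hookrightarrow\Aff^N_S$ and a general linear projection I can choose a finite flat generically \'etale $R$-morphism $\pi:M\to\Aff^1_S$ (relative Noether normalisation of the smooth affine curve $M/S$) which is moreover \'etale along $s_0(S)\cup s_1(S)$. Passing to a smooth relative compactification $\overline M$ of $M$ over $S$, the two sections become relative effective Cartier divisors on $\overline M$, and the hypothesis $s_0(P)=s_1(P)$ means that their difference restricts to $0$ over the closed point $P$. Moving the fibre $\pi^{-1}(\pi\circ s_0)$ linearly towards $\pi^{-1}(\pi\circ s_1)$ then produces a finite flat family over the $u$-line, which one arranges (using the infinitude of the residue fields and a Bertini-type genericity argument for the smoothness of $Z$) to be generically \'etale over $\Aff^1_S$ with the two distinguished fibres above.

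The hard part is that this naive family does not have the clean fibres posited above: over $\{u=0\}$ it produces $s_0(S)$ \emph{together with} a residual divisor $R_0$, and over $\{u=1\}$ it produces $s_1(S)$ together with a residual $R_1$, and the residuals do not match on the nose. The transfer identity then only yields
$$
s_0^*-s_1^*=(\text{transfer along }R_1)-(\text{transfer along }R_0),
$$
so controlling this residual discrepancy is the technical heart of the theorem, and is exactly where the torsion hypothesis $l\CF=0$ and the injectivity of $\CF(R)\to\CF(K)$ enter. Following~\cite{SuVo,Gab92,HY}, I would interpret the degree-zero difference $[s_0(S)]-[s_1(S)]$ as a section of the relative Jacobian of $\overline M/S$ that vanishes over the closed point $P$; since $l$ is invertible, multiplication by $l$ on this abelian scheme is \'etale, so by Hensel's lemma its $l$-torsion is rigid over the Henselian base $S$, forcing the difference to vanish modulo $l$. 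As $\CF$ is $l$-torsion, only this class modulo $l$ influences the transfers, and the residual terms cancel; the injectivity $\CF(R)\hookrightarrow\CF(K)$ is then used to verify the resulting vanishing at the generic point, where the genericity and smoothness needed for $Z$ are available. The remaining work is the routine verification that the argument of~\cite[Theorem 3.1]{HY} (equivalently~\cite[Theorem 4.3]{SuVo}) uses nothing beyond the weak-transfer axioms of Definition~\ref{def:transfer}, the $\Aff^1$-invariance, and the $l$-torsion of $\CF$.
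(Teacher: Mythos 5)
Your reduction to an interpolating family is where the argument breaks. As you yourself note, a finite flat $q:Z\to\Aff^1_S$ whose fibres over $u=0$ and $u=1$ are \emph{exactly} the graphs of $s_0$ and $s_1$ cannot exist in general (it would have degree one, i.e.\ be an honest $\Aff^1$-homotopy of sections), so the whole proof rests on the residual-divisor step. But the claim that ``only the class modulo $l$ influences the transfers, and the residual terms cancel'' is precisely the statement that needs proof, and it is not routine: it amounts to constructing a bilinear pairing
$$
\left<-,-\right>\colon\Pic(C,C_\infty)\times \CF(C^\circ)\to \CF(K),
\qquad
\l<\mathcal{O}_C(P_0-P_1),-\r>=P_0^*-P_1^*,
$$
on a compactified curve $C$, i.e.\ showing that the transfer-built pairing on divisors kills all divisors of meromorphic functions equal to $1$ on $C_\infty$. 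This is the paper's Theorem~\ref{thm:pairing}, and its proof genuinely uses every axiom of Definition~\ref{def:transfer}: one reduces to unramified divisors, uses the function $h:C\to\Pro^1$ itself to build a closed embedding with trivial normal bundle, applies the base-change axiom (3) to the principal smooth divisors $\{0\}$ and $\{\infty\}$ in $\Pro^1_K\setminus\{1\}\cong\Aff^1_K$, the irrelevant-summand axiom (4) to compare the two embeddings, and $\Aff^1$-invariance to identify $i_0^*$ with $i_\infty^*$. The paper explicitly warns that this is the step of~\cite{HY} which is sketchy or tied to representable theories and must be redone for weak transfers, so deferring it as ``routine verification'' leaves the heart of the theorem unproved. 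Once the pairing exists, the paper does not need any homotopy over $\Aff^1_S$ at all: it passes immediately to the generic fibre ($P_i=(s_i)_K$), invokes Gabber's argument~\cite[pp.~68--69]{Gab92} to get $l$-divisibility of $\mathcal{O}_{\bar M}(s_0-s_1)$ in the relative Picard group over the Henselian base --- this is the correct form of your Jacobian-rigidity step --- and concludes by bilinearity and $l\CF=0$.

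A second, smaller error: you invoke the injectivity of $\CF(R)\to\CF(K)$, but that is \emph{not} a hypothesis of this theorem (it appears only in Theorem~\ref{thm:rigidity-main}, which cites the present statement). Fortunately it is also unnecessary: the conclusion only concerns the compositions $\CF(M)\to\CF(R)\to\CF(K)$, so one may work at the generic point from the outset, which is exactly what the paper does.
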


The proof of Theorem~\ref{thm:HY3.1} is essentially the same as that of~\cite[Theorem 3.1]{HY} which
in its turn is based on~\cite[proof of the Rigidity Lemma, pp. 66--68]{Gab92} and on some
ideas of~\cite{SuVo,PaY,Ya-II}. We redo only those steps of the above proofs that are sketchy or appeal to the properties
of $\Aff^1$-stably representable cohomology theories.

Recall that the relative Picard group $\Pic(X,Z)$, where $X$ is a scheme and $Z$ is a closed subscheme of $X$,
is by definition the set of isomorphism classes of
pairs $(L,\psi)$ where $L$ is a line bundle on $X$ and $\psi$ is a trivialization of $L|_Z$. We also
denote by $\Div(X,Z)$ the group of Cartier divisors on $X$ supported outside of $Z$.
For any divisor $D$ on $X$ we denote by $\mathcal{O}_X(D)$ the corresponding element of the Picard group.

The following theorem essentially recaptures~\cite[Theorem 3.3]{HY}.

\begin{thm}\label{thm:pairing}
Let $k,K$ and $\CF$ be the same as in Theorem~\ref{thm:HY3.1}.
Let $\sigma:C^\circ\hookrightarrow\Aff^n_K$ be a smooth affine curve over $K$ with trivial tangent bundle, let $C$ be the normalization of
its projective closure, and let $C_\infty=(C\setminus C^\circ)_{red}$. Let $P_0,P_1\in C^\circ(K)$
be two closed points. There is a bilinear pairing
$$
\left<-,-\right>\colon\Pic(C,C_\infty)\times \CF(C^\circ)\to \CF(K)
$$
such that
$$
\l<\mathcal{O}_C(P_0-P_1),-\r>=(P_0)^*-(P_1)^*
$$
as maps from $\CF(C^\circ)$ to $\CF(K)$.
\end{thm}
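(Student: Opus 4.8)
The plan is to build the pairing first on divisors and then descend to $\Pic(C,C_\infty)$. Recall that the natural map $\Div(C,C_\infty)\to\Pic(C,C_\infty)$, sending a divisor $D\subset C^\circ$ to $\mathcal O_C(D)$ together with its canonical trivialization at $C_\infty$, is surjective (as $C^\circ$ is affine, every $(L,\psi)$ has a rational section regular and equal to $\psi$ along $C_\infty$, whose divisor lies in $C^\circ$), with kernel the principal divisors $\divi(g)$, $g\in K(C)^\times$, $g|_{C_\infty}=1$. Since $K$ is infinite and $C^\circ$ is a smooth affine curve, a general-position argument represents any class by a reduced divisor $D\subset C^\circ$ that is finite \'etale over $K$ and disjoint from $C_\infty$, $P_0$ and $P_1$. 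For such $D$ the map $f_D\colon D\to\Spec K$ is finite flat and generically \'etale, and $D\hookrightarrow C^\circ\hookrightarrow\Aff^n_K$ (the latter via $\sigma$) is a closed embedding; its conormal sheaf is a vector bundle on the finite, hence semilocal, $K$-scheme $D$, so it is free. Definition~\ref{def:transfer} thus supplies an oriented weak transfer $f_{D*}\colon\CF(D)\to\CF(K)$, and I set
\[
\langle D,\alpha\rangle:=f_{D*}\bigl(\alpha|_D\bigr),\qquad\alpha\in\CF(C^\circ).
\]
The usual linear-homotopy argument for oriented weak transfers of an $\Aff^1$-invariant presheaf shows that $f_{D*}$ is independent of the embedding $\tau$, so this value is unambiguous.

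Bilinearity in $\CF(C^\circ)$ is immediate, since restrictions and transfers are homomorphisms of abelian groups. Additivity in the divisor variable follows from compatibility with disjoint unions, Definition~\ref{def:transfer}(1): choosing disjoint reduced representatives $D,D'$ (again possible as $K$ is infinite), the embedding of $D\sqcup D'$ restricts to those of $D$ and $D'$, whence $\langle D+D',\alpha\rangle=\langle D,\alpha\rangle+\langle D',\alpha\rangle$. For the normalization, $P_0,P_1\in C^\circ(K)$ are $K$-rational, so $f_{P_i}$ is an isomorphism onto a connected component; Definition~\ref{def:transfer}(2) then identifies the transfer with the pullback, giving $\langle\mathcal O_C(P_i),\alpha\rangle=(P_i)^*\alpha$ and hence $\langle\mathcal O_C(P_0-P_1),-\rangle=(P_0)^*-(P_1)^*$.

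The crux is that $\langle D,-\rangle$ depends only on the class of $D$, i.e. $\langle D_0,-\rangle=\langle D_1,-\rangle$ whenever $D_0-D_1=\divi(g)$ with $g|_{C_\infty}=1$. I view $g$ as a finite morphism $g\colon C\to\Pro^1_K$ (coordinate $u$) with $g^{-1}(0)=D_0$, $g^{-1}(\infty)=D_1$, $g(C_\infty)=\{1\}$, and post-compose with the automorphism $m(u)=u/(u-1)$, which sends $0,\infty,1$ to $0,1,\infty$. Put $\psi=m\circ g$ and $C'=\psi^{-1}(\Aff^1_K)=C\setminus g^{-1}(1)\subseteq C^\circ$; then $C'$ is a smooth affine curve, $\psi\colon C'\to\Aff^1_K$ is finite flat generically \'etale, and $\psi^{-1}(0)=D_0$, $\psi^{-1}(1)=D_1$. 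As $C^\circ$ has trivial tangent bundle, so does its open subscheme $C'$, so the normal bundle of the closed embedding $C'\hookrightarrow\Aff^1_K\times_K\Aff^n_K$, $x\mapsto(\psi(x),\sigma(x))$, is stably free; a stably free bundle of rank exceeding $\dim C'=1$ on the affine curve $C'$ is free (stabilize by irrelevant summands via Definition~\ref{def:transfer}(4) if needed), so there is a transfer $\psi_*\colon\CF(C')\to\CF(\Aff^1_K)$. The points $\{0\},\{1\}\hookrightarrow\Aff^1_K$ are principal smooth divisors, so the base-change axiom, Definition~\ref{def:transfer}(3), yields
\[
i_0^*\circ\psi_*=(\psi_0)_*\circ j_0^*,\qquad i_1^*\circ\psi_*=(\psi_1)_*\circ j_1^*,
\]
where $i_\epsilon\colon\Spec K\hookrightarrow\Aff^1_K$, $j_\epsilon\colon\psi^{-1}(\epsilon)\hookrightarrow C'$, $\psi_\epsilon\colon\psi^{-1}(\epsilon)\to\Spec K$. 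By $\Aff^1$-invariance every section of $\Aff^1_K\to\Spec K$ induces the inverse of the isomorphism $p^*\colon\CF(K)\xrightarrow{\sim}\CF(\Aff^1_K)$, so $i_0^*=i_1^*$; combining the two identities and precomposing with the restriction $\CF(C^\circ)\to\CF(C')$ gives $(\psi_0)_*(\alpha|_{D_0})=(\psi_1)_*(\alpha|_{D_1})$, that is $\langle D_0,\alpha\rangle=\langle D_1,\alpha\rangle$ (the family-induced transfers agree with the defining ones by the $\tau$-independence noted above).

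The main obstacle is exactly this last step, and within it the role of the two hypotheses: the trivial tangent bundle of $C^\circ$ is what lets me trivialize the normal bundle and define $\psi_*$, while the generic \'etaleness of $\psi$ is needed for the transfer to exist at all. In positive characteristic $g$ may be inseparable; I would circumvent this by factoring the relation $D_0\sim D_1$ through an auxiliary separable representative, using that over the infinite field $K$ a sufficiently general member of the relevant pencil (not composed with Frobenius) defines a separable, hence generically \'etale, morphism to $\Aff^1_K$. All remaining compatibilities between the family-induced transfers and the embeddings used to define $\langle-,-\rangle$ are furnished by $\Aff^1$-invariance together with Definition~\ref{def:transfer}(4).
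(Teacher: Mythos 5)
Your proposal runs on the same core mechanism as the paper's proof (which follows Hornbostel--Yagunov): attach transfer maps to the points of a divisor using the embedding induced by $\sigma$, and prove invariance under a relation $D_0-D_1=\divi(g)$, $g|_{C_\infty}=1$, by viewing $g$ as a finite flat map from an affine neighbourhood to $\Pro^1_K$ minus a point $\cong\Aff^1_K$, trivializing the normal bundle of the graph embedding via the trivial tangent bundle, and applying property (3) of Definition~\ref{def:transfer} at the two fibers together with $i_0^*=i_1^*$ coming from $\Aff^1$-invariance. The differences are in the divisor bookkeeping. The paper works with $\widetilde{\Pic}(C,C_\infty)=\Div_s(C,C_\infty)/(\Div_s(C,C_\infty)\cap\mathcal{M})$, quoting~\cite[Proposition 3.12]{HY} for $\widetilde{\Pic}\cong\Pic$, defines the pairing linearly on \emph{all} separable divisors, and then must invoke~\cite[Lemma 3.11]{HY} to decompose an arbitrary relation into unramified ones. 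You define the pairing only on reduced \'etale representatives and prove independence of the representative; this makes the relation automatically unramified (so you bypass the analogue of~\cite[Lemma 3.11]{HY}), and your M\"obius normalization placing the fibers at $0$ and $1$ means the base-changed embeddings are literally $\sigma|_{D_\epsilon}$, so no appeal to property (4) or to ``$\tau$-independence'' is needed --- which is fortunate, since $\tau$-independence is not among the axioms and your ``linear homotopy'' claim for it is unproven; it is simply unnecessary here.

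The price of your route is that the full weight falls on your unproven ``general-position'' claim: that every class in $\Pic(C,C_\infty)$ is represented by a reduced divisor in $C^\circ$ finite \'etale over $K$ (and that two classes admit disjoint such representatives). This is exactly the nontrivial input the paper outsources to~\cite[Proposition 3.12]{HY}, and in positive characteristic it is genuinely delicate: here $K$ is the fraction field of a Henselian essentially smooth local ring, hence typically imperfect, so a reduced closed point of $C^\circ$ need not be \'etale over $K$, and Bertini-type general position must be argued with separability in hand. Note also that your characteristic-$p$ worry is aimed at the wrong object: if the fibers $D_0$, $D_1$ are nonempty, reduced and \'etale over $K$, then $g$ is automatically separable (an inseparable factor of $g$ would force either multiplicities $>1$ or inseparable residue fields in its fibers), so no ``auxiliary separable representative'' is needed there; the real characteristic-$p$ issue sits in the moving lemma above. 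Finally, your identification $g^{-1}(0)=D_0$, $g^{-1}(\infty)=D_1$ presupposes that $D_0$ and $D_1$ are disjoint; when they share points you must first cancel the common reduced part and run the argument on the disjoint remainders, using property (1) of Definition~\ref{def:transfer} --- a minor, fixable omission.
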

\begin{proof}
Following~\cite{HY}, denote by $\Div_s(C,C_\infty)$ the subgroup of $\Div(C,C_\infty)$ consisting
of separable divisors (i.e. divisors $D=\sum a_ix_i$ such that the points $x_i$ correspond to finite separable field
extensions $L_i/K$) supported outside of $C_\infty$. Let $\mathcal{M}$ be the subgroup of $\Div(C,C_\infty)$ consisting of divisors
given by meromorphic functions taking the value $1$ on $C_\infty$.
Set
$$
\widetilde{\Pic}(C,C_\infty)=\Div_s(C,C_\infty)/(\Div_s(C,C_\infty)\cap\mathcal{M}).
$$
By~\cite[Proposition 3.12]{HY} the natural map $\widetilde{\Pic}(C,C_\infty)\to\Pic(C,C_\infty)$
is an isomorphism, hence it is enough to construct the pairing for the group $\widetilde{\Pic}(C,C_\infty)$.

We denote by
$$
f:C\to\Spec K
$$
the structure morphism of $C$. Define the pairing
$$
\l<-,-\r>\colon \Div_s(C,C_\infty)\times  \CF(C^\circ)\to  \CF(K)
$$
as follows: for any divisor $D=\sum a_ix_i$, where $x_i:\Spec L_i\to C^\circ$ are closed points, and any
$\alpha\in  \CF(C^\circ)$
set
$$
\l<D,\alpha\r>=\sum a_i\cdot (f\circ x_i)_*^{\sigma\circ x_i}\bigl(x_i^*(\alpha)\bigr).
$$
Since $f\circ P_0=f\circ P_1=\id_K$, by property $(2)$ of Definition~\ref{def:transfer}
one has
\begin{equation*}
\l<P_0-P_1,\alpha\r>=P_0^*(\alpha)-P_1^*(\alpha)\quad\mbox{for any}\ \alpha\in \CF(C^\circ).
\end{equation*}

It remains to show that for any $D\in \Div_s(C,C_\infty)\cap\mathcal{M}$
the map
$$
\l<D,-\r>\colon \CF(C^\circ)\to  \CF(K)
$$
is trivial.
Following~\cite{PaY}, we say that a divisor $D=\sum a_ix_i$ is unramified, if all $a_i$ are equal to $\pm 1$.
By~\cite[Lemma 3.11]{HY} any divisor $D\in \Div_s(C,C_\infty)\cap\mathcal{M}$ can be written as
a sum of unramified divisors in $\Div_s(C,C_\infty)\cap\mathcal{M}$. Thus, we need to show
that $\l<D,-\r>$ is trivial for an unramified divisor $D$.

Let
$h:C\to\Pro_K^1$ be a meromorphic function such that $\divi(h)=D$ and $h$ takes value $1$ on $C_\infty$.
Choose an open affine neighbourhood $\tilde C^\circ$ of $\Supp(D)$ contained
in the open subscheme of $C^\circ$ given by $h\neq 1$.
Consider the natural morphism
$$
\tilde h=h|_{\tilde C^\circ}:\tilde C^\circ\to\Pro^1_K\setminus\{1\}\cong\Aff^1_K,
$$
and let $\tau=(\sigma,\tilde h):\tilde C^\circ\hookrightarrow\Aff^n_K\times\Aff^1_K$ be the corresponding closed embedding.
By~\cite[Remark 3.10]{HY} the embedding $\tau$ has trivial normal bundle.

Since $D$ is unramified, $\tilde h$ is \'etale over $\{0\}$ and $\{\infty\}$, and
the divisors $D_0$ and
$D_\infty$ representing the $0$-locus and the $\infty$-locus of $h$ are of the form
$D_0=\sum_i y_i$, $D_\infty=\sum_j z_j$. To simplify the notation, we identify $D_0$ and $D_\infty$ with their supports
in $\tilde C^\circ$.
Consider the following diagram:
\begin{equation*}
\xymatrix@R=20pt@C=35pt{
\CF(D_0)\ar[d]^{(\tilde h|_{D_0})_*^\tau}&\CF(\tilde C^\circ)\ar[r]^{(\sqcup z_j)^*}
\ar[l]_{(\sqcup y_i)^*}\ar[d]^{\tilde h_*^\tau}&
     \CF(D_\infty)\ar[d]^{(\tilde h|_{D_\infty})_*^\tau}\\
\CF(K)&\CF(\Pro_K^1\setminus\{1\})\ar[r]^{i_\infty^*}\ar[l]_{i_0^*}&\CF(K)\\
}
\end{equation*}
By property $(3)$ of Definition~\ref{def:transfer} the left hand and
right hand squares of this diagram are commutative. Since $\CF$ satisfies $\Aff^1$-invariance,
$i_0^*$ and $i_\infty^*$ are isomorphisms and coincide.
Let $x:\Spec L\to \tilde C^\circ$ be any point with $L/K$ separable and $\tilde h(x)=0$. Then the transfer maps
$(f\circ x)_*^{\sigma\circ x}$ and $(f\circ x)_*^{\tau\circ x}$ coincide by
property $(4)$ of Definition~\ref{def:transfer}. By $\Aff^1$-invariance the same is true if $\tilde h(x)=\infty$. Together with
property $(1)$ of Definition~\ref{def:transfer}, this implies that for any $\alpha\in\CF(C^\circ)$ one has
$$
\left<D_0,\alpha\right>=(\tilde h|_{D_0})_*^\tau\circ (\sqcup y_i)^*(\alpha)=
(\tilde h|_{D_\infty})_*^\tau\circ (\sqcup z_j)^*(\alpha)=\left<D_\infty,\alpha\right>.
$$
Hence $\l<D,-\r>=0$, as required.
\end{proof}

\begin{proof}[Proof of Theorem~\ref{thm:HY3.1}]
Let $P_i=(s_i)_K:\Spec K\to M_K$ be the maps induced by $s_i$, $i=0,1$, via the base change $\Spec K\to\Spec R$.
Clearly, it is enough to show that $P_0^*=P_1^*$ as maps $\CF(M_K)\to\CF(K)$.
Since $s_0,s_1$ coincide on the closed fiber of $M$, the points $s_0(P)$ and $s_1(P)$ belong to the same
irreducible component of $M$. Without loss of generality, we assume that $M$ is irreducible.
Then $P_i$ belong to the same
irreducible component of $M_K$; denote this component by $X$. Since $X$ has a rational $K$-point, it
is geometrically connected over $K$, and hence geometrically irreducible.

Let $\sigma:M\to \Aff^n_R$ be a regular closed embedding of $M$ into an affine space, and let $\bar M$
be the projective closure of $M$ in $\Pro_R^n$, so that $M$ is open in $\bar M$.
The argument on pp. 68--69 of~\cite{Gab92}
shows that $\mathcal{O}_{\bar M}(s_0-s_1)\in\Pic(\bar M,(\bar M\setminus M)_{red})$ is $l$-divisible for
any integer $l\in k^\times$. We  choose $l$ to be the torsion integer of $\CF$. Consequently, $\mathcal{O}_{\bar X}(P_0-P_1)$
is $l$-divisible in $\Pic(\bar X,(\bar X\setminus X)_{red})$, where $\bar X$ is the closure of $X$ in $\bar M_K$.

Choose an open affine neighbourhood $C^\circ$ of $\{P_0,P_1\}$ in $X$ such that the tangent bundle
is trivial when restricted to $C^\circ$; this is possible since the restriction of the tangent bundle
to the semilocalization at $\{P_0,P_1\}$ is trivial.
Clearly, in order to prove the theorem, it is enough to show that $P_i^*:\CF(C^\circ)\to\CF(K)$ coincide.

Note that the closure of $C^\circ$ in $\bar M_K$ coincides with $\bar X$.
Let $C\to\bar X$ be the
normalization of $C^\circ$. Since $\mathcal{O}_{\bar X}(P_0-P_1)$ is $l$-divisible
in $\Pic(\bar X,(\bar X\setminus X)_{red})$, we conclude that $\mathcal{O}_{C}(P_0-P_1)$ is $l$-divisible in
$\Pic(C,(C\setminus C^\circ)_{red})=\Pic(C,C_\infty)$.
Apply Theorem~\ref{thm:pairing} to $C^\circ=U$ and the points $P_i$. Then for any $\alpha\in\CF(U)$ one has
$$
P_0^*(\alpha)-P_1^*(\alpha)=\l<\mathcal{O}_C(P_0-P_1),\alpha\r>.
$$
Since $\CF$ is $l$-torsion,
we conclude that $P_0^*(\alpha)-P_1^*(\alpha)=0$ for any $\alpha\in\CF(C^\circ)$, as required.
\end{proof}

\begin{proof}[Proof of Theorem~\ref{thm:rigidity-main}]
Take $R=\mathcal{O}_{X,x}^h$, where $X$ is $k$-smooth and $x\in X(k)$. We can replace $X$
with an open affine neighbourhood $U$ of $x$ such that there is an \'etale morphism $\pi:U\to\Aff^n_k$ for
some $n\ge 0$. Then, clearly, $\pi(x)\in\Aff^n_k(k)$ and $R\cong\mathcal{O}_{\Aff^n_k,\pi(x)}^h$.
Now the claim of the theorem is proved by induction on $n$;
the case $n=0$ is clear.
Take a generic linear projection $p:\Aff^n_k\to\Aff^{n-1}_k$. Set $x'=p(\pi(x))$,
$R'=\mathcal{O}_{\Aff^{n-1}_k,x'}$, and let $K'$ be the fraction field of $R'$. Clearly, one can find a section $s$ of $p$
satisfying $s(x')=\pi(x)$. Let $i:R'\to R$ and $j:R\to R'$ be the homomorphisms induced by $p$ and $s$ respectively,
so that $i\circ j=\id_{R'}$. Then $i^*:\CF(R')\to\CF(R)$ is injective, and hence the composition
$$
\CF(R')\to\CF(K')\to\CF(K)
$$
is injective. Therefore, the map $\CF(R')\to\CF(K')$ is  injective. Applying the induction hypothesis to $R'$,
we conclude that $\CF(k)\cong\CF(R')$. The rest of the proof is exactly the same as the proof
of~\cite[Theorem 4.4]{SuVo}, relying on Theorem~\ref{thm:HY3.1} combined with the injectivity of the map $\CF(R)\to\CF(K)$
instead of~\cite[Theorem 4.3]{SuVo}.
\end{proof}

\begin{lem}\label{lem:Dtorsion}
Let $k$ be a field, and let $G$ be an isotropic simply connected simple algebraic group over $k$
of the Tits index ${}^1 D^{(d)}_{l,r}$ or ${}^2 D^{(d)}_{l,r}$, $r\ge 1$, in the sense of~\cite{Tits66}.
Then $K_{1,Nis}^G$ is $d$-torsion.
\end{lem}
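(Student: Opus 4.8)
The plan is to reduce the assertion to field values, identify those with $R$-equivalence class groups, and then extract the $d$-torsion from a transfer along a degree-$d$ splitting field of the Tits algebra. First I would apply Lemma~\ref{lem:inj}$(i)$: the sheaf $K_{1,Nis}^G$ embeds into $\prod_i K_{1,Nis}^G(k(X_i))$, and for a field $F$ essentially smooth over $k$ one has $K_{1,Nis}^G(F)=K_1^G(F)$ (a field is Henselian local). Since $G$ is simply connected and isotropic, Gille's theorem~\cite[Th\'eor\`eme 7.2]{Gil} gives $K_1^G(F)\cong G(F)/RG(F)$. Hence it suffices to prove that $G(F)/RG(F)$ is $d$-torsion for every field extension $F$ of $k$. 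Write $G=\Spin(A,\sigma)$, where $A$ is the degree-$2l$ central simple algebra carrying the orthogonal involution $\sigma$ attached to $G$; by definition of the Tits index, $\operatorname{ind}(A)=d$.

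Next I would use the transfer maps of Theorem~\ref{thm:transf-gen}. Choosing a maximal separable subfield $L$ of the division algebra underlying $A$ gives a finite separable extension $L/F$ with $[L:F]=d$ over which $A_L$ is split. Let $\operatorname{res}_{L/F}\colon K_1^G(F)\to K_1^G(L)$ be restriction and $N_{L/F}\colon K_1^G(L)\to K_1^G(F)$ the Chernousov--Merkurjev transfer of Theorem~\ref{thm:transf-gen}(3). The crucial input is the projection formula
$$
N_{L/F}\circ \operatorname{res}_{L/F}=d\cdot\operatorname{id}_{K_1^G(F)}.
$$
This holds because the norm on $K_1^G$ is built from the torus norm $N_{L/F}\colon T(L)\to T(F)$ attached to the sequence~\eqref{eq:seq}, and on classes already defined over $F$ the torus norm is multiplication by $[L:F]=d$; this passes through the construction of $\hat\beta$ in Lemma~\ref{lem:K1transf} to the induced map on $K_1^G$.

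Finally I would show $K_1^G(L)=1$, which together with the projection formula yields $d\cdot x=N_{L/F}(\operatorname{res}_{L/F}(x))=0$ for all $x\in K_1^G(F)$. Over $L$ the algebra $A_L$ is split, so $G_L=\Spin(q_L)$ for a $2l$-dimensional quadratic form $q_L$, and the combinatorics of the Tits indices ${}^1D^{(d)}_{l,r}$ and ${}^2D^{(d)}_{l,r}$ (in which $d$ is exactly $\operatorname{ind}(A)$ and the anisotropic kernel is governed by the same algebra) should force $G_L$ to become quasi-split once $A$ is split by a degree-$d$ extension; then $K_1^G(L)=1$ by~\cite[Th\'eor\`eme 6.1]{Gil}. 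The hard part is precisely this last step: justifying that a separable splitting field of $A$ of degree $d$ renders $G$ quasi-split, rather than merely trivializing its Tits algebra. If that geometric step is delicate, the same bound can be obtained from the projection formula together with the Chernousov--Merkurjev description of $G(F)/RG(F)$ as a subquotient of $F^\times/\mathrm{Nrd}(A^\times)$: since $[L:F]=d$ one has $(F^\times)^d=N_{L/F}(F^\times)\subseteq N_{L/F}(L^\times)\subseteq \mathrm{Nrd}(A^\times)$, so $F^\times/\mathrm{Nrd}(A^\times)$, and hence $G(F)/RG(F)$, is killed by $d$.
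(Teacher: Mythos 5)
Your overall skeleton is the same as the paper's: reduce to fields via Lemma~\ref{lem:inj}, identify $K_1^G(F)$ with $G(F)/RG(F)$, pass to a degree-$d$ separable splitting field of the Tits algebra, and kill everything with the projection formula for the Chernousov--Merkurjev norm. (The paper produces the splitting field from a degree-$d$ Azumaya algebra in the first Tits algebra $\beta_G(\omega_1)$, citing~\cite[proof of Theorem 3]{PS-tind}, rather than from the realization $G=\Spin(A,\sigma)$; that difference is inessential.) The genuine gap is exactly at the step you flag as hard, and your proposed repair cannot work: splitting $A$ does \emph{not} make $G_L$ quasi-split, and no choice of splitting field can force this. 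Already for $d=1$ the lemma asserts that $K_{1,Nis}^G$ is trivial for any isotropic $\Spin(q)$ with $q$ a quadratic form, e.g.\ $q$ an anisotropic $8$-dimensional form plus a hyperbolic plane, whose spin group is isotropic but far from quasi-split and where there is no field extension to pass to at all. What the paper actually uses is much weaker and is automatic: $q_L$ is \emph{isotropic}, because $G$ is isotropic over $k$ (this is where $r\ge 1$ enters) and isotropy survives any field extension; and the Whitehead group of the spin group of an isotropic quadratic form vanishes --- this Eichler--Kneser-type vanishing is precisely what the paper invokes via~\cite[Th\'eor\`eme 6.1]{Gil}. With that reference in place of your quasi-splitness claim, your main line closes and coincides with the paper's proof; without it, the argument is incomplete.

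Two secondary points. First, the projection formula $N_{L/F}\circ\mathrm{res}_{L/F}=d\cdot\mathrm{id}$ is correct, but your justification is not a proof: elements of $G(F)$ map to $1$ in $T(F)$, so the observation that the torus norm is multiplication by $d$ on $T(F)$ says nothing about them, and ``passing through the construction of $\hat\beta$'' in Lemma~\ref{lem:K1transf} is exactly the nontrivial content. The correct reference is~\cite[Proposition 4.4]{ChM-su}, which is what the paper cites; note the paper does not route this through Theorem~\ref{thm:transf-gen} at all but works directly with the Chernousov--Merkurjev norm on fields. Second, your fallback claim that $G(F)/RG(F)$ is a subquotient of $F^\times/\mathrm{Nrd}(A^\times)$ is unsupported: \cite{ChM-su} computes $R$-equivalence for special unitary groups of outer type $A$, and \cite{ChM} for spinor groups of quadratic forms (split $A$, where the claim is vacuous); no such description is available for $\Spin(A,\sigma)$ with $A$ nonsplit, so the fallback does not rescue the argument either.
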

\begin{proof}
The definition of Tits indices implies that the first Tits algebra $\beta_G(\omega_1)$ of $G$ in $\Br(k)$ contains
an Azumaya algebra $E$ over $k$ of degree $d$; see~\cite[the proof of Theorem 3]{PS-tind}. Then, clearly, for any field extension $K/k$
the class $\beta_{G_K}(\omega_1)$ contains $E_K$. Therefore, there is a separable field extension $K'/K$
of degree $d$ such that $E_{K'}$ is split and $\beta_{G_{K'}}(\omega_1)$ is trivial. Then
$G_{K'}\cong\Spin(q)$, where $q$ is an isotropic quadratic form over $K'$. By~\cite[Th\'eor\`eme 6.1]{Gil} one has
$K_1^G(K')=1$. Denote $i:\Spec K'\to\Spec K$ the canonical morphism. By~\cite[Proposition 4.4]{ChM-su}
for any $x\in K_1^G(K)$ one has
$$
i^*\circ i_*(x)=x^{[K':K]}=x^{d}.
$$
Since $K_1^G(K')=1$, this implies that $K_1^G(K)$ is $d$-torsion. Then Lemma~\ref{lem:inj} implies
that $K_{1,Nis}^G$ is $d$-torsion.
\end{proof}

\begin{proof}[Proof of Corollary~\ref{cor:rigidity}]
If the field $k$ is finite or $G$ is of trialitarian type, then we conclude that $K_{1,Nis}^G$ is trivial
as in the proof of Theorem~\ref{thm:transf}. Hence $K_1^G(R)=K_1^G(R/m)$.
Assume that $k$ is infinite; then $\kappa=R/m$ is infinite. The group $G$ has the Tits index
${}^1 D^{(d)}_{l,r}$ or ${}^2 D^{(d)}_{l,r}$, and by Lemma~\ref{lem:Dtorsion} $K_{1,Nis}^G$ is $d$-torsion.
By the classification of Tits indices~\cite{Tits66,PS-tind} one has $d=2^m$ for some $m\ge 0$, hence $d$
is coprime to $\Char k$.

Assume first that
$R$ is of the form $\mathcal{O}_{X,x}^h$,
where $X$ is a smooth $k$-scheme and $x\in X(k)$. By Lemma~\ref{lem:inj} the map
$K_{1,Nis}^G(R)\to K_{1,Nis}^G(K)$ is injective, where $K$ is the field of fractions of $R$. Then by
Theorem~\ref{thm:transf} combined with Theorem~\ref{thm:rigidity-main} we conclude that $K_{1,Nis}^G(k)\cong K_{1,Nis}^G(R)$.

Now let $R$ be an arbitrary Henselian local $k$-ring with a coefficient subfield $k\cong R/m$.
The embedding $k\to R$ is geometrically regular, since $k$ is perfect.
Therefore by Popescu's theorem~\cite{Po90,Swan} $R$ is a filtered direct limit of local
essentially smooth $k$-algebras $A_i$. Clearly, $k$ is the residue field for each $A_i$ as well. By the universal
property of henselization, we can replace
every such algebra $A_i$ with its henselization $A_i^h$.
Since $K_1^G$ commutes with filtered direct limits of $k$-algebras,
we have
$$
K_1^G(R)=\varinjlim_i K_1^G(A_i^h)=\varinjlim_i K_{1,Nis}^G(A_i^h).
$$
Since $K_1^G(k)=K_{1,Nis}^G(k)\cong K_{1,Nis}^G(A_i^h)$ by the previous case, we are done.
\end{proof}

\end{document}